\newcommand{\angles}[1]{\left\langle #1 \right\rangle}
\newcommand{\bb}{\mathbb}
\newcommand{\E}{\bb E}
\newcommand{\F}{\bb F}
\newcommand{\ind}{\mathds{1}}
\newcommand{\inv}{^{-1}}
\newcommand{\mc}{\mathcal}
\newcommand{\mf}{\mathfrak}
\newcommand{\norm}[1]{\left\Vert#1\right\Vert}
\newcommand{\R}{\bb R}
\newcommand{\Z}{\bb Z}
\renewcommand{\P}{\bb P}
\newtheorem{theorem}{Theorem}[section]
\newtheorem{corollary}[theorem]{Corollary}
\newtheorem{fact}[theorem]{Fact}
\newtheorem{lemma}[theorem]{Lemma}
\newtheorem{proposition}[theorem]{Proposition}
\title{Popular Differences for Corners in Abelian Groups}
\author{Aaron Berger\thanks{Department of Mathematics, MIT \textit{bergera@mit.edu}}}
\date{}
\begin{document}

\maketitle

\begin{abstract}
	For a compact abelian group $G$, a \textit{corner} in $G \times G$ is a triple of points $(x,y)$, $(x,y+d)$, $(x+d,y)$. The classical {corners theorem} of Ajtai and Szemer\'edi implies that for every $\alpha > 0$, there is some $\delta > 0$ such that every subset $A \subset G \times G$ of density $\alpha$ contains a $\delta$ fraction of all corners in $G \times G$, as $x,y,d$ range over $G$. 
	
	Recently, Mandache proved a ``popular differences'' version of this result in the finite field case $G = \F_p^n$, showing that for any subset $A \subset G \times G$ of density $\alpha$, one can fix $d \neq 0$ such that $A$ contains a large fraction, now known to be approximately $\alpha^4$, of all corners with difference $d$, as $x,y$ vary over $G$. We generalize Mandache's result to all compact abelian groups $G$, as well as the case of corners in $\Z^2$.
\end{abstract}

\section{Introduction}
The following \textit{popular differences} version of Szemer\'edi's theorem was conjectured by Bergelson, Host, and Kra \cite{bhk05} and proved by Green \cite{g05} for $k = 3$  and Green-Tao \cite{gt10} for $k = 4$: every subset of $[N]$ of size at least $\alpha N$ contains at least $(\alpha^k - o(1))N$ $k$-term arithmetic progressions, or $k$-APs, with the same common difference. That is, such a set contains $(\alpha^k - o(1))N$ distinct copies of
$
\{x, x+d, \ldots,x+(k-1)d\}
$
for some fixed $d \neq 0$. These results involve the method of arithmetic regularity developed by Green, and the lower bounds are essentially best possible; a randomized construction gives subsets of density $\alpha$ and only $(\alpha^k + o(1))N$ $k$-APs
with common difference $d$ for each $d \neq 0$. Such polynomial bounds for AP counts are not the norm in additive combinatorics. Indeed, in an appendix to \cite{bhk05}, Rusza shows that for $k \ge 5$, one can construct sets with density $\alpha$ and fewer than $\alpha^{o(1)}$ distinct $k$-APs\footnote{The $o(1)$ term goes to 0 as $\alpha \to 0$ and $N \to \infty$.}  with common difference $d$ for each $d \neq 0$. The natural place to look for generalizations is in higher-dimensional configurations. The \textit{corners theorem} of Ajtai and  Szemer\'edi \cite{as74} is a classical result in this style in two dimensions, implying that any subset of $[N]^2$ with at least $\alpha N^2$ elements contains $\Omega(N^3)$ corners, which are triples of the form
$
\{(x,y), (x,y+d), (x+d,y)\}.
$
As usual, the dependence of the implicit constant in $\Omega(N^3)$ on the density $\alpha$ is quite poor. One might hope to obtain a better dependence for some fixed $d$ than what one obtains on average by the Ajtai-Szemer\'edi result. The following result due to Mandache \cite{m18} does precisely this, but in the finite field model instead of $[N]$. For a family $\mc F$ of finite abelian groups, let $M_{\mc F}(\alpha) \in [0,1]$ be the minimum value such that the following statement is true: For every $A \subset G \times G$ with size at least $\alpha |G|^2$, there is some $d\neq 0$ such that $A$ contains $(M_{\mc F}(\alpha) - o(1))|G|^2$ corners with common difference $d$, where the $o(1)$ term goes to 0 as $|G| \to \infty$. Mandache shows that for fixed $p$ and $\mc F = \{\F_p^n\}$, one has
$$
m'(\alpha) \le M_{\mc F}(\alpha) \le m(\alpha),
$$ 
where $m'(\alpha)$ and $m(\alpha)$ are polynomially large in terms of $\alpha$ and are given by the solutions to a certain variational problem we describe below. In a somewhat surprising difference from the $k$-AP case, Mandache shows that the exponents in the growth rates of $m(\alpha), m'(\alpha)$ are strictly greater than 3, whereas random subsets of $G \times G$ have approximately $\alpha^3|G|^2$ corners for each fixed difference $d \neq 0$. The asymptotic growth rates of $m$ and $m'$ were recently determined by Fox, Sah, Sawhney, Stoner, and Zhao \cite{fsssz19}, who also discuss other possible generalizations and barriers to generalization for popular differences results. We will include their bounds on $m$ and $m'$ following the discussion of the variational problem itself.

For $\phi:[0,1]^3 \to [0,1]$, define
\begin{equation*}
T(\phi) := \int_{[0,1]^3}dx\,dy\,dz \int_{[0,1]}\phi(x,y,z')\,dz'\int_{[0,1]}\phi(x,y',z)\,dy'\int_{[0,1]}\phi(x',y,z)\,dx'.
\end{equation*}
We are concerned with the infimum of $T(\phi)$ over $\phi$ with a fixed expectation:
$$
m(\alpha) := \inf_{\substack{\phi:[0,1]^3 \to [0,1]\\\E[\phi] = \alpha}} T(\phi).
$$
This expression may be rewritten by taking independent $X,Y,Z \sim $ Unif$([0,1])$, in which case one has
\begin{equation}\label{eqn:variational problem}
T(\phi) = \E\big[\E(\phi|X,Y)\E(\phi|X,Z)\E(\phi|Y,Z)\big].
\end{equation}
It is clear that the underlying probability space is unimportant here; if $X,Y,Z$ are any independent random variables and $\phi$ has expectation $\alpha$, then $T(\phi) \ge m(\alpha)$. 

Mandache showed that for any family of finite abelian groups, one has
\begin{equation*}
M(\alpha) \le m(\alpha).
\end{equation*}
Secondly, let $m'(\alpha)$ be the maximal convex function such that $m'(\alpha ) \le m(\alpha)$ pointwise. Mandache proved that for fixed $p$ and $\mc F = \{F_p^n\}$, one has
\begin{equation*}
M_{\mc F}(\alpha) \ge m'(\alpha).
\end{equation*}
More specifically, for the lower bound Mandache showed that for $A \subseteq G \times G$ with density $\mu(A) = \alpha$, there is a subspace $W \subseteq G$ with codimension bounded in terms of $\epsilon$ so that 
$$
\E_{x,y \in G, d \in W} \big[\ind_A(x,y) \ind_A(x,y+d)\ind_A(x+d,y)\big] \ge m'(\alpha)-\epsilon.
$$
Letting $n \to \infty$, by the boundedness of codim$(W)$, the corners with difference $d = 0$ contribute $o(1)$ to this expectation, and so he concludes that there is some $d \neq 0$ with 
$$
\E_{x,y \in G} \big[\ind_A(x,y) \ind_A(x,y+d)\ind_A(x+d,y)\big] \ge m'(\alpha)-O(\epsilon).
$$
Since this inequality holds for every $\epsilon$ as $n \to \infty$, we obtain the popular differences result $M_{\mc F}(\alpha) \ge m'(\alpha)$. Mandache showed that
$$
\alpha^4 \le m'(\alpha) \le m(\alpha) \le C\alpha^{3.13}.
$$
Fox, Sah, Sawhney, Stoner, and Zhao \cite{fsssz19} determined more precise asymptotics, showing:
$$
\omega(\alpha^4) \le m'(\alpha) \le m(\alpha) \le \alpha^{4-o(1)},
$$
where the $o(1)$ term approaches 0 as $\alpha \to 0$, and the $\omega(\alpha^4)$ term is $\alpha^4/o(1)$.

We generalize Mandache's result to all compact abelian groups.
\begin{theorem}\label{thm:main thm}
	For any $\alpha, \epsilon > 0$, there is some absolute $c > 0$ such that the following holds: For any compact abelian group $G$ with Haar probability measure $\mu$ and any set $A \subseteq G \times G$ with $\mu(A) = \alpha$, there is a Bohr set $B \subseteq G$ with $\mu(B) \ge c$ such that 
	\begin{equation*}
	\int_{\substack{x,y \in G\\r \in B}} \ind_A(x,y) \ind_A(x,y+r) \ind_A(x+r,y)~dx\,dy\,dr \ge m'(\alpha) - \epsilon.
	\end{equation*}
	
\end{theorem}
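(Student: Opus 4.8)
The plan is to transpose Mandache's finite-field argument to the compact setting, with Bohr sets replacing finite-index subgroups. First I would record the change of variables $X=x$, $Y=y$, $Z=x+y+r$: since $x,y,r$ are independent so are $X,Y,Z$, and the three vertices of a corner of difference $r$ pairwise share the coordinates ``$x$-value $X$'', ``$y$-value $Y$'', and ``anti-diagonal value $Z$''. Hence the quantity to bound equals
\[
\tfrac{1}{\mu(B)}\,\E_{X,Y,Z\in G}\big[\ind_A(X,Y)\,\ind_A(X,Z-X)\,\ind_A(Z-Y,Y)\,\ind_B(Z-X-Y)\big],
\]
a weighted tripartite triangle count subject to the linear constraint $Z-X-Y\in B$; expanding $\ind_B$ through its defining characters makes the dependence on $B$ explicit. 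This form also makes transparent why $T$ is the governing functional: when $A$ is built from the three independent ``direction coordinates'', the $B$-constrained average is precisely $T$ of the corresponding profile.

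Next I would invoke --- or, since it may not appear in the literature in this exact shape, establish --- an arithmetic regularity lemma for compact abelian $G$: for every growth function $\mc F$ there is $C=C(\epsilon,\mc F)$ so that one can find a finite $\Gamma\subset\hat G$ with $|\Gamma|\le C$, a radius $\rho\ge 1/C$, the Bohr set $B=B(\Gamma,\rho)$ with $\mu(B)\ge c(C)>0$, and a decomposition $\ind_A=f+g+h$ where $f$ is (approximately) measurable with respect to the factor of $G\times G$ generated by $x\mapsto\chi(x)$ and $y\mapsto\chi(y)$ ($\chi\in\Gamma$), $\|h\|_{L^2}\le\epsilon'$, and $g$ is pseudorandom in the box/uniformity norm that controls the constrained triangle count, $\|g\|_{\square}\le\eta$. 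With $B$ adapted to the large additive structure of $A$, one chooses $\mc F$ after seeing the rate at which $c(C)$ degrades, so that $\eta\ll\mu(B)^{O(1)}\epsilon$. The genuinely new feature relative to $\F_p^n$ is that Bohr sets do not tile, so ``$f$ constant on atoms'' and ``atoms shift-invariant under $B$'' hold only approximately; this is handled by shrinking $B$ to a sub-Bohr set on which the characters of $\Gamma$ are nearly constant, at bounded cost in $\mu(B)$.

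A generalized von Neumann step --- two Cauchy--Schwarz applications, with the $\ind_B(Z-X-Y)$ factor costing the already-budgeted powers of $\mu(B)^{-1}$ --- then shows that replacing $\ind_A$ by $f$ alters the constrained count by only $O(\eta\,\mu(B)^{-O(1)})+O(\epsilon')$, so it suffices to analyze the structured count. There one uses that the factor has bounded complexity and that the extra averaging over $Z$ (equivalently over the difference) decorrelates the horizontal, vertical, and anti-diagonal directions, to rewrite the structured count as $\E_i\!\big[T(\phi_i)\big]+o(1)$, where $i$ ranges over the atoms of a coarse ``direction'' factor, $\phi_i$ records the local profile of $A$ along three mutually independent coordinates, and $\E\phi_i=\alpha_i$ is a local density with $\E_i[\alpha_i]=\alpha$. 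By the stated variational bound $T(\phi_i)\ge m(\alpha_i)\ge m'(\alpha_i)$, and convexity of $m'$ gives $\E_i[m'(\alpha_i)]\ge m'(\E_i[\alpha_i])=m'(\alpha)$. Assembling these estimates yields the bound $m'(\alpha)-\epsilon$ with $c=c(C)$ depending only on $\alpha,\epsilon$; note there is no ``discard $d=0$'' step as in $\F_p^n$, since $B$ has positive measure.

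The crux lies in the middle two steps. The hard part will be pinning down the precise box/uniformity norm that governs the $B$-constrained triangle count and verifying that the regularity lemma produces $g$ small in exactly that norm with the correct quantitative dependence on $\mu(B)$. Two requirements must be reconciled here: the partition must be arithmetic, so corners with difference in $B$ stay inside one structured atom, yet it must control a genuinely graph-theoretic quantity --- and since the extremal configurations are pseudorandom in the additive sense but highly structured as bipartite graphs, the regularity used must be the graph-flavored version adapted to the three corner directions rather than plain Fourier truncation on $G\times G$. Once that regularity statement and its quantitative bookkeeping are in place, the remaining steps --- realizing the structured count as $\E_i[T(\phi_i)]$, which is what produces the convex minorant $m'$ rather than $m$, and the Bohr-set manipulations --- are routine.
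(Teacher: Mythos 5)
Your overall architecture is the same as the paper's: symmetrize to three directions, regularize $\ind_A$ into structured $+$ $L^2$-small $+$ uniform, bound the uniform part by a generalized von Neumann argument against the Bohr-weighted constraint, realize the structured count locally as $\E_V[T(\phi_V)]$, and use convexity of $m'$. You also correctly observe that there is no ``discard $d=0$'' step since $B$ has positive measure. So the skeleton is right. The gap is in the regularity lemma you propose to use, and it is precisely the gap the paper's Lemma \ref{lem:full double regularity} exists to close.

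The regularity lemma you state --- $f$ approximately measurable with respect to the factor generated by $\chi(x)$ and $\chi(y)$ for $\chi \in \Gamma$ --- is a purely Bohr (Fourier) factor, and this is not enough to make $\|g\|_{\square}$ small: a function on $G\times G$ can be exactly balanced along every Fourier box and still have large box norm. You notice this yourself in your last paragraph, but the conclusion you draw (``use graph-flavored regularity instead'') then breaks the other half of the argument, because a purely graph-theoretic partition of $G$ has cells that bear no relation to the linear constraint $x+y+z=0$, and there is no reason a box $B_i\times C_j\times D_k$ of graph cells should meet the hyperplane in measure $\approx \delta_{B_i}\delta_{C_j}\delta_{D_k}\mu(V\cap P)$, which is exactly the identity you need when you try to read off $T(\phi_V)$ from the structured count. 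The paper's resolution is an iterated \emph{double} regularity (Lemma \ref{lem:full double regularity}): one alternately refines a Bohr partition $P_i = \mathfrak B(S_i,\delta_i)$ and a graph partition $\Pi_i$, with the crucial extra demand that the \emph{indicator of each graph cell} $\pi\in\Pi_i$ is itself Fourier-uniform inside every Bohr part (the $I=I_0+I_1+I_2$ clause in part~3 of that lemma). That Fourier control on the cells is what makes Lemma \ref{lem:box intersect P} and Lemma \ref{lem:evaluate weights of boxes} true, i.e.\ what lets one compute $\mu(P\cap B_i\times C_j\times D_k)$ and $\int\ind_{B_i}\ind_{C_j}\ind_{D_k}\,\nu(-x-y-z)$ to multiplicative error $1+O(\epsilon)$; without it the passage from the structured count to $\sum_V \mu(V\cap P)\,T(\phi_V)$ has no justification. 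An energy increment makes the alternating refinement terminate, and the Bohr/graph refinements must be interleaved rather than done once, since regularizing the graph side creates new cells whose indicators must then be re-regularized on the Fourier side.

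A smaller point: you suggest handling the $\ind_B(Z-X-Y)$ weight by expanding it in characters. That meshes well with a Fourier-uniform error term but not with a box-norm-uniform $g$; the paper instead approximates the translated Bohr set $\{(x,y): x+y+z_0\in B\}$ by a bounded union of boxes (Corollary \ref{cor:box approx}), which is the decomposition that the box norm actually pairs against. Either route can likely be made to work, but if you keep a character expansion you will need to route the uniformity through Fourier coefficients rather than the cut norm, which changes the bookkeeping in the von Neumann step.
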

From this result and a simple modification we obtain the following two corollaries.
\begin{corollary}
	Let $G$ be any finite abelian group and $A \subseteq G \times G$ have size $|A| \ge \alpha |G|^2$. Then there is some $r \neq 0$ such that $A$ contains at least $(m'(\alpha) - o(1))|G|^2$ corners of the form $\{(x,y),(x,y+r),(x+r,y)\}$.
\end{corollary}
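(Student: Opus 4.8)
The plan is to deduce the corollary directly from Theorem~\ref{thm:main thm} by discarding the degenerate difference $r=0$, exactly as Mandache passes from an averaged statement over all differences in a subspace $W$ to a statement about a single nonzero difference. The one feature of Theorem~\ref{thm:main thm} that makes this work is that the Bohr set it produces has measure bounded below by a constant independent of $|G|$.

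First I would set up the finite translation. Write $\mu$ for the Haar probability measure on $G$, which here is normalized counting measure; we may assume $\mu(A) = \alpha$, since deleting points of $A$ only decreases the corner count. Applying Theorem~\ref{thm:main thm} with parameters $\alpha$ and $\epsilon$ produces a constant $c = c(\alpha,\epsilon) > 0$ and a Bohr set $B \subseteq G$ with $\mu(B) \ge c$, i.e.\ $|B| \ge c|G|$, such that
\[
\frac{1}{|B|}\sum_{r \in B} g(r) \ \ge\ m'(\alpha) - \epsilon, \qquad g(r) \ :=\ \frac{1}{|G|^{2}}\sum_{x,y\in G} \ind_A(x,y)\,\ind_A(x,y+r)\,\ind_A(x+r,y),
\]
which is the finite reading of the integral in the theorem. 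For $r \neq 0$ the points $(x,y),(x,y+r),(x+r,y)$ are pairwise distinct and the map $(x,y)\mapsto\{(x,y),(x,y+r),(x+r,y)\}$ is injective — one recovers $(x,y)$ as the unique vertex $p$ of the triple with $p+(0,r)$ and $p+(r,0)$ both in the triple — so $|G|^{2}g(r)$ equals the number of corners of $A$ with common difference $r$.

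Next I would remove the term $r = 0$. Since $g(0) = \mu(A) = \alpha = O(1)$ while $|B \sm \{0\}| \ge c|G| - 1 \to \infty$, deleting $r = 0$ from the average in the displayed inequality changes it by $o(1)$; hence the average of $g$ over the nonempty set $B \sm \{0\}$ is still at least $m'(\alpha) - \epsilon - o(1)$, and so some $r \in B$ with $r \neq 0$ satisfies $g(r) \ge m'(\alpha) - \epsilon - o(1)$. By the previous paragraph $A$ contains at least $(m'(\alpha) - \epsilon - o(1))|G|^{2}$ corners with this common nonzero difference. Since $\epsilon > 0$ was arbitrary and the $o(1)$ error above does not depend on $A$, this yields the asserted bound of $(m'(\alpha) - o(1))|G|^{2}$ corners. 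The only step here beyond bookkeeping is the elimination of $r = 0$, and it rests entirely on the bound $\mu(B) \ge c$ with $c$ not depending on $|G|$; without such a bound $B$ could in principle collapse onto $\{0\}$ and the conclusion would fail.
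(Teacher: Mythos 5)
Your proof is correct and is precisely the argument the paper intends: apply Theorem~\ref{thm:main thm} to get a Bohr set of measure at least a constant $c$ independent of $|G|$, observe that the $r=0$ term contributes only $O(1/(c|G|)) = o(1)$ to the average over $B$, and then pick a good nonzero $r$ by averaging---this is the Bohr-set analogue of the subspace argument from Mandache that the paper recounts in its introduction. (A minor point: since $\alpha|G|^2$ need not be an integer one cannot literally delete points to reach $\mu(A)=\alpha$, but this is handled by running the argument at density $|A|/|G|^2 \ge \alpha$ and using that $m'$ is non-decreasing, or by the Lipschitz continuity of $m$ noted in the paper.)
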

\begin{corollary}\label{thm:corners in Z}
	Let $A \subset [n]^2$ have size $|A| \ge \alpha n^2$. Then there is some $r \neq 0$ such that $A$ contains at least $(m'(\alpha) - o(1))|G|^2$ corners of the form $\{(x,y),(x,y+r),(x+r,y)\}$.
\end{corollary}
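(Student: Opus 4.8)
The plan is to reduce Corollary~\ref{thm:corners in Z} to Theorem~\ref{thm:main thm} applied to the cyclic group $G = \Z/n\Z$, where $[n]$ is identified with $\{0,1,\dots,n-1\}$ and $A$ is viewed as a subset of $G\times G$. The point of using $\Z/n\Z$ (rather than a larger group) is that no density is lost: $\mu(A) = \alpha$ exactly. The one genuine subtlety is that a triple $(x,y),(x,y+r),(x+r,y)$ with all three points in $A\subseteq\{0,\dots,n-1\}^2$ need not be an honest corner in $[n]^2$ — it is an honest corner precisely when, writing $r$ for its representative in $[0,n)$, either both $x+r<n$ and $y+r<n$ (using difference $+r$) or both $x+r\ge n$ and $y+r\ge n$ (using difference $r-n<0$); a triple where exactly one coordinate wraps is not a corner at all. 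So the output difference must be kept small.

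Concretely, I would first invoke Theorem~\ref{thm:main thm} to get a Bohr set $B\subseteq G$ with $\mu(B)\ge c$ and
\[
\int_{x,y\in G,\ r\in B}\ind_A(x,y)\,\ind_A(x,y+r)\,\ind_A(x+r,y)\ dx\,dy\,dr\ \ge\ m'(\alpha)-\epsilon .
\]
Since $\{0\}$ has measure $1/n = o(1)$ and $\mu(B)\ge c$, discarding $r=0$ costs only $o(1)$ in the average over $r\in B$. Now for a small parameter $\rho>0$ consider the short interval $I_\rho := \{r\in G:\ \|r/n\|_{\R/\Z}<\rho\}$, which is itself a rank-one Bohr set of measure $2\rho$; the key quantitative claim I would need is that the popular difference can be found inside $I_\rho$, i.e.\ that the analogue of the displayed estimate holds with $B$ replaced by $B\cap I_\rho$ (up to an $O(\epsilon)$ loss) where $B$ is now taken to be a sub-Bohr-set of $I_\rho$ of relative measure bounded below in terms of $\alpha,\epsilon,\rho$. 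Granting this, there is a nonzero $r_0\in B\cap I_\rho$ with $\E_{x,y\in G}\big[\ind_A(x,y)\ind_A(x,y+r_0)\ind_A(x+r_0,y)\big]\ge m'(\alpha)-O(\epsilon)$, and since the representative of $r_0$ in $[0,n)$ lies in $[0,\rho n)\cup(n-\rho n,n)$, a direct count shows at most $O(\rho n^2)$ of the $n^2$ triples with difference $r_0$ have a coordinate wrapping and hence fail to be corners. Therefore $A$ contains at least $\big(m'(\alpha)-O(\epsilon)-O(\rho)\big)n^2$ honest corners with difference $r_0$ (reported as $+r_0$ or $r_0-n$ according to the orientation), and letting $\epsilon,\rho\to 0$ slowly with $n$ gives the stated bound. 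The finite-abelian-group corollary is the same argument with the wraparound step deleted, since there corners are genuine group configurations and no localization of the difference is needed.

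I expect the main obstacle to be exactly the localization claim of the previous paragraph — that the popular difference may be placed inside a prescribed short interval. I would obtain it by rerunning the proof of Theorem~\ref{thm:main thm} relative to the partition of $G$ induced by $I_\rho$, so that the regularity/energy-increment argument outputs a Bohr set $B$ refining $I_\rho$; this is the exact analogue of Mandache's localization of the $\F_p^n$ argument to a subspace of bounded codimension, with $I_\rho$ playing the role of that subspace. The cost is a bounded increase in the rank of $B$ and an additive error of size $O(\rho)$, both of which vanish in the limit, so nothing else in the argument is affected.
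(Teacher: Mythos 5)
Your proposal is correct and takes essentially the same approach as the paper: the paper likewise embeds $A$ into $(\Z/n\Z)^2$ and forces the output difference into a short interval by adding the single character $x \mapsto \exp(2\pi i x/n)$ to every frequency set $S_i$ during the regularity iteration (which is precisely your "rerun the proof relative to $I_\rho$" step, costing only a bounded rank increase), and then discards the $O(\rho n^2 \mu(B))$ wraparound triples with $\rho = O(\epsilon)$.
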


\subsection{Notation}
Let $(G,+)$ be a compact abelian group, with Haar probability measure $\mu$, and a (discrete) dual group $\hat G$ of characters $\xi: G \to \R/\Z$. We will use function evaluation notation for characters, so $\xi(x)$ denotes the image of $x \in G$ under $\xi \in \hat G$.
For a measurable function $f: G \to \R$ and a measurable partition $P$ of $G$, we let $f_{P} = \E(f|P)$ be the function obtained by averaging $f$ on each part of $P$. For measurable $X \subseteq G$ with $\mu(X) > 0$, define 
$$
\mu_X := \frac{\ind_X}{\mu(X)},
$$
to be the indicator of $X$, normalized to have integral 1.\\

For asymptotics, we use $x = O(y)$ and $x \lesssim y$ when we would otherwise write $x \le Cy$ for some absolute constant $C$. An absolute constant is independent of any variables in the problem. For example, it suffices to prove Theorem \ref{thm:main thm} with $m'(\alpha) - \epsilon$ replaced by $m'(\alpha) - O(\epsilon)$, as the implicit constant is independent of $\epsilon$.

For $f : G \to \R$, we use the $L^p$ norms, normalized as follows.
$$
\norm{f}_{L^p} = \left(\int_{G} |f(x)|^p~dx\right)^{1/p}.
$$
For $\hat f: \hat G \to \R/\Z$, we use $\ell^p$ norms.
$$
\big\Vert\hat f\big\Vert_{\ell^p} = \left(\sum_{\xi \in \hat G} (\hat f(\xi))^p\right)^{1/p}.
$$
Similarly, Fourier transforms are written with an integral over the real domain and a sum over the frequency domain, so $\hat f(\xi) = \int_G f(x)e^{-2\pi i\xi(x)} dx$ and $f(x) = \sum_{\xi \in \hat G} \hat f(\xi)e^{2\pi i\xi(x)}$. Using this notation, Plancherel's theorem states $\norm{f}_{L^2} = \big\Vert\hat f\big\Vert_{\ell^2}$.
Finally, for $x \in \R$ or $\R/\Z$, we write $\norm{x}_{\R/\Z}$ to mean the distance from $x$ to the nearest integer.

\section{Bohr set preliminaries}
The \textit{Bohr set} given by a finite set of frequencies $S \subset \hat G$ and $\rho > 0$ is defined to be
$$
B(S,\rho) = \{x \in G: \sup_{\xi \in S}\norm{\xi (x)}_{\R/\Z} < \rho\}.
$$
For $\delta = 1/N$, we also define the Bohr partition $\mf B(S,\delta)$ to be the union of parts of the form
$$
\left\{x \in G : \xi_i \cdot x \in \left[\frac{s_i - 1}{N}, \frac{s_i}{N}\right) ~~\forall i \in [d]\right\}, 
$$ 
for each choice of $\{s_i\} \in [N]^d$. The number of parts in a Bohr partition is $|\mf B(S,\delta)| = \delta^{-|S|}$.
Each Bohr set has size bounded below by a constant depending on $\rho$ and $|S|$:
\begin{fact}\label{fact:volume bound}
	For any Bohr set $B(S,\rho)$, there exists a constant $C_{|S|,\rho} > 0$ depending only on $\rho$ and $|S|$ such that:
	$$
	\mu(B(S,\rho)) \ge C_{|S|,\rho}.
 	$$
\end{fact}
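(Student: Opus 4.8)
The plan is to push the Haar measure of $G$ forward onto a finite-dimensional torus, where the claim becomes a pigeonhole statement that no longer mentions $G$. Write $S = \{\xi_1,\dots,\xi_d\}$ with $d = |S|$, and consider the map $\Phi : G \to \mathbb{T}^d := (\R/\Z)^d$ given by $\Phi(x) = (\xi_1(x),\dots,\xi_d(x))$. Since each $\xi_i$ is a continuous character, $\Phi$ is a continuous group homomorphism, so its image $H := \Phi(G)$ is a compact, hence closed, subgroup of $\mathbb{T}^d$. Because $\Phi\colon G\to H$ is a continuous surjective homomorphism of compact groups, the pushforward $\Phi_*\mu$ is a translation-invariant Borel probability measure on $H$, so by uniqueness of Haar measure it equals the normalized Haar measure $\nu_H$ on $H$. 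Writing $U_\rho := \{t \in \mathbb{T}^d : \norm{t_i}_{\R/\Z} < \rho \text{ for all } i\}$ for the open box of radius $\rho$, one has $B(S,\rho) = \Phi\inv(U_\rho) = \Phi\inv(U_\rho \cap H)$, and therefore $\mu(B(S,\rho)) = \nu_H(U_\rho \cap H)$. So it suffices to bound $\nu_H(U_\rho \cap H)$ from below by a constant depending only on $d$ and $\rho$, uniformly over all closed subgroups $H \le \mathbb{T}^d$.

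For the reduced statement I would argue by pigeonhole. If $\rho > 1/2$ then $U_\rho = \mathbb{T}^d$ and the bound is trivial, so assume $\rho \le 1/2$ and fix an integer $N > 1/\rho$. Partition $\mathbb{T}^d$ into the $N^d$ translates of the half-open cube $[0,1/N)^d$. The cubes meeting $H$ partition $H$ into at most $N^d$ pieces, so some piece $H \cap Q$ has $\nu_H(H \cap Q) \ge N^{-d}$. Picking any $h_0 \in H \cap Q$, the set $(H \cap Q) - h_0$ lies in $H$ and, since coordinatewise differences of two points in a common cube have $\R/\Z$-norm strictly less than $1/N < \rho$, it also lies in $U_\rho$. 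Translation invariance of $\nu_H$ then gives
\begin{equation*}
\nu_H(U_\rho \cap H) \ge \nu_H\big((H \cap Q) - h_0\big) = \nu_H(H \cap Q) \ge N^{-d},
\end{equation*}
so the Fact holds with $C_{|S|,\rho} = N^{-|S|}$ for any integer $N > 1/\rho$.

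I do not expect a genuine obstacle here; the two places that need a little care are (i) checking that $\Phi_*\mu$ really is the Haar measure of the image subgroup $H$, which is exactly where surjectivity of $\Phi$ onto $H$ and uniqueness of Haar measure on a compact group are used, and (ii) choosing $N$ so that $1/N$ is strictly below $\rho$, which is what keeps differences within a single cube inside the open box $U_\rho$. An alternative is to induct on $|S|$, peeling off one character at a time and using that the level sets of a character in $G$ are cosets of a closed subgroup and hence each carry a fixed fraction of the measure; the pushforward-to-$\mathbb{T}^d$ argument simply packages this and avoids the recursion.
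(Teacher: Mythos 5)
Your proof is correct, and it is essentially the same covering/pigeonhole argument as the paper's: both partition into at most $N^{|S|}$ pieces of mesh $\delta = 1/N < \rho$ and use the triangle inequality to see that each piece, translated to the origin, sits inside $B(S,\rho)$. The paper works directly in $G$ with the Bohr partition $\mf B(S,\delta)$ and phrases it as a covering of $G$ by $N^{|S|}$ translates of $B(S,\rho)$, which gives $\mu(B(S,\rho)) \ge N^{-|S|}$; you push forward to a closed subgroup $H \le \mathbb{T}^d$ and pigeonhole on the cubes of side $1/N$, arriving at the same bound. The pushforward step is correct and conceptually tidy (it isolates the statement as one purely about subgroups of the torus), but it is an optional intermediate: the Bohr partition $\mf B(S,\delta)$ is exactly the pullback of the cube partition, so you can run your pigeonhole directly in $G$ and skip invoking uniqueness of Haar measure on $H$, which is what the paper does.
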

\begin{proof}
	Consider the maximal $\delta < \rho, \delta = 1/N$. By the triangle inequality, for any $x \in G$, whichever part of $\mf B(S,\delta)$ contains $x$ is itself entirely contained in $x+B(S,\rho)$. Choosing one representative $x$ from each nonempty part of $\mf B(S, \delta)$, we see that $N^{|S|}$ translates of $B(S,\rho)$ suffice to cover $G$.
\end{proof}
When drawing analogies between the finite field model and the case of general abelian groups, Bohr sets take the role of subspaces. One major problem with the general setting is that Bohr sets, unlike subspaces, are not closed under addition. The common approach to handle this relies on the fact that Bohr sets are approximately closed under \textit{addition by elements of much smaller Bohr sets}. The properties we need are collected in Proposition \ref{prop:sets with partitions} and Corollary \ref{cor:set convolution}, and may be obtained without relying on the regular neighborhoods of Bourgain or the smoothed neighborhoods of Tao (for reference, see \cite{b99, g05,t14}).

In this proposition we look at the interplay between a ``coarse'' partition $\mf B(S, \delta)$, a ``fine'' partition $\mf B(S', \delta')$, and an ``intermediate'' Bohr set $B(S,\rho)$. As long as $\rho$ is a sufficiently small with respect to $\delta$ and a sufficiently large with respect to $\delta'$, we have that almost all translates of $B(S, \rho)$ lie inside a single part of $\mf B(S, \delta)$, and almost all parts of $\mf B(S',\delta')$ that intersect a fixed translate of $B(S,\rho)$ are entirely contained in that translate.
\begin{proposition}\label{prop:sets with partitions}
	Let $S \subseteq S' \subseteq \hat G$ and fix $\epsilon_0 > 0$. We have:
	\begin{enumerate}
		\item If $\rho \le \epsilon_0 \delta/|S|$, then for all but an $O(\epsilon_0)$-fraction of $x \in G$, the Bohr set translate $x + B(S, \rho)$ is entirely contained in a single part of the Bohr partition $\mf B(S, \delta)$.
		\item If $\delta' \le \epsilon_0 C_{|S|, \rho}/|S|$,
		\footnote{This is the $C_{|S|,\rho}$ from Fact \ref{fact:volume bound}--we are simply requiring $\delta'$ to be smaller than some constant depending on $|S|, \rho, \epsilon_0$.}
		then for all $x \in G$ and all but an $O(\epsilon_0)$-fraction of $y \in B(S, \rho)$, $x+y$ lies in a part of $\mf B(S', \delta')$ that is entirely contained in $x + B(S,\rho)$. 
	\end{enumerate}
\end{proposition}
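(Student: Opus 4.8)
The proof of both parts reduces to \emph{coordinate-by-coordinate} estimates, and the plan is to set up one common framework and then run essentially the same argument twice. First I would observe that one may assume $\epsilon_0$ is below any fixed threshold, since otherwise an ``$O(\epsilon_0)$-fraction'' bound is vacuous; write $\delta = 1/N$. The single ingredient I need is that for a character $\xi \in \hat G$, the pushforward of $\mu$ under $\xi$ is the Haar measure on $H_\xi := \xi(G)$, which --- since $G$ is compact --- is a closed subgroup of $\R/\Z$, hence either all of $\R/\Z$ or a finite cyclic group $\tfrac1m\Z/\Z$. Counting $\tfrac1m$-lattice points, this gives for any arc $I \subseteq \R/\Z$ of length $\ell$ that $\mu(\{x : \xi(x) \in I\}) \le \ell$ if $H_\xi = \R/\Z$ and $\le \ell + 2/m$ if $|H_\xi| = m < \infty$.

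For part (1), the part of $\mf B(S,\delta)$ containing $x$ is recorded, for each $\xi \in S$, by which length-$\delta$ interval $\xi(x)$ lies in. Thus $x + B(S,\rho)$ can leave that part only if, for some $\xi \in S$, some $y \in B(S,\rho)$ puts $\xi(x) + \xi(y)$ into a neighbouring interval; since $\norm{\xi(y)}_{\R/\Z} < \rho$, this pins $\xi(x)$ to within $\rho$ of one of the $N$ interval endpoints, i.e.\ into a union of $N$ arcs of length $2\rho$. Applying the estimate and a union bound over $\xi \in S$ (using the last paragraph below to discard characters with tiny image), the measure of the set of ``bad'' $x$ is $\lesssim \sum_{\xi \in S} \rho N \lesssim \epsilon_0$, by the hypothesis $\rho \le \epsilon_0 \delta / |S|$, i.e.\ $\rho N \le \epsilon_0 / |S|$.

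For part (2), fix $x$, and for $y \in B(S,\rho)$ let $Q_y$ be the part of the fine partition $\mf B(S', \delta')$ containing $x + y$. For any $z \in Q_y$ and $\xi \in S \subseteq S'$, the values $\xi(z)$ and $\xi(x+y)$ lie in a common length-$\delta'$ interval, so $\norm{\xi(z) - \xi(x+y)}_{\R/\Z} < \delta'$ and hence $\norm{\xi(z - x)}_{\R/\Z} < \norm{\xi(y)}_{\R/\Z} + \delta'$. Therefore $Q_y \subseteq x + B(S,\rho)$ whenever $\norm{\xi(y)}_{\R/\Z} < \rho - \delta'$ for every $\xi \in S$, so the bad $y$ lie in $B(S,\rho) \setminus B(S, \rho - \delta') = \bigcup_{\xi \in S} \{y \in B(S,\rho) : \norm{\xi(y)}_{\R/\Z} \in [\rho - \delta', \rho)\}$. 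For $\xi$ with $H_\xi = \R/\Z$ or large finite image each set in this union lies over two arcs of length $\delta'$, so the estimate bounds its measure by $\lesssim \delta'$; summing over $\xi$ and dividing by $\mu(B(S,\rho)) \ge C_{|S|,\rho}$ (Fact \ref{fact:volume bound}), the bad fraction is $\lesssim |S| \delta' / C_{|S|,\rho} \le \epsilon_0$ by the hypothesis on $\delta'$.

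The step I expect to need the most care is characters $\xi$ with a \emph{small} finite image, for which the lattice-point count above leaves an additive error $2/m$ that the hypotheses do not make negligible on their own. The fix is one structural observation, used identically in both parts: if $|H_\xi| \le 1/\rho$ then $\xi$ sends all of $B(S,\rho)$ to $0$, so $\xi$ does not move $\xi(x)$ and drops out of part (1); and if $|H_\xi| \le 1/\delta'$ then a length-$\delta'$ interval contains a \emph{unique} point of $\tfrac1m\Z/\Z$, forcing $\xi(z) = \xi(x+y)$ and hence $\norm{\xi(z-x)}_{\R/\Z} = \norm{\xi(y)}_{\R/\Z} < \rho$, so $\xi$ cannot obstruct $Q_y \subseteq x + B(S,\rho)$ in part (2). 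After discarding these, every surviving $\xi$ has $|H_\xi| > 1/\rho$ (resp.\ $> 1/\delta'$), so its $2/m$ error is at most $2\rho$ (resp.\ $2\delta'$) and is absorbed into the bounds above; the remainder is routine bookkeeping with arcs.
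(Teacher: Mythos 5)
Your proof is correct and follows the same character-by-character strategy as the paper: discard characters whose image is trivial at the relevant scale, bound the measure of the boundary arcs for the remaining characters, and union bound over $\xi \in S$. The only real difference is in establishing the key measure estimate: where the paper argues via translation-invariance by an auxiliary $x_0$ with $0 < \norm{\xi(x_0)}_{\R/\Z} < \rho$, you invoke the classification of closed subgroups of $\R/\Z$ and push forward Haar measure, which is a clean and equivalent way to make explicit the error contribution from characters with small finite image.
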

\begin{proof}
	The strategy is to show that the image of elements of $G$ under a character $\xi$ are either evenly distributed in $\R/\Z$ or do not affect our computation. For those which are evenly distributed, a simple union bound suffices to show that most $x \in G$ are not close to the boundary of a Bohr set or Bohr part in the ``direction of'' any character. 
	
	We begin with the proof of Part 1. To determine which part of a Bohr partition contains $x \in G$, it suffices to determine the values of $\xi (x)$ for each $\xi \in S$. For $\xi \in S$, we consider two possibilities. If there is no $x_0 \in G$ with $0 < \norm{\xi (x_0)}_{\R/\Z} < \rho$, then adding any element of $B(S, \rho)$ to any $x \in G$ will not change the value of $\xi(x)$, and so we may ignore such $\xi$.
	
	Otherwise, there exists $x_0 \in G$ with $0 < \norm{\xi(x_0)}_{\R/\Z} < \rho$. In this case, since the map $x \mapsto x + x_0$ is measure-preserving, the sets $$\{x \in G: \xi(x) \in [(k-1)\xi(x_0), k \xi(x_0)]\}$$ are of equal measure. A union of $\lceil 1/(\xi(x_0)) \rceil = \Theta(1/\xi(x_0))$ of these sets cover $G$, and so each interval has measure $\Theta(1/(\xi(x_0)))$. By translation, for any interval $I \subset \R/\Z$ with length $|I| \ge \xi(x_0)$, the set $\{x \in G: \xi(x) \in I\}$ has measure $\Theta(|I|)$. Thus, the set 
			$$S_{\delta, \xi}:=\{x \in G: \norm{\xi(x) - k\delta }_{\R/\Z} \le \rho \text{ for some } k \in \Z\}$$
			is a union of $O(1/\delta)$ preimages under $\xi$ of intervals of measure $2\rho \ge \xi(x_0)$, and so it has measure 
			$$\mu(S_{\delta, \xi}) \lesssim \rho/\delta \lesssim \epsilon_0/|S|.$$

	For any $x \notin S_{\delta, \xi}$, by triangle inequality, adding any $y \in B$ cannot change the value of the largest multiple of $\delta$ less than $\xi(x)$, and summing this up over all $\xi \in S$ gives a subset of measure $O(\epsilon_0)$ which contains all the elements of $x$ that are bad for some $\xi$, which completes Part 1.\\
	
	Part 2 proceeds in a similar manner. For any $y \in G$ lying in some part of $p \in \mf B'$ we know that $p \subseteq y+B(S', \delta') \subseteq y+B(S,\delta')$. It therefore suffices to show that for all $x$ and all but an $\epsilon_0$-fraction of $y \in x+B(S, \rho)$, we have $y + B(S, \delta') \subset x+B(S,\rho)$. By translation we may assume $x = 0$. 
	
	Let $\xi \in S$.
	If there is no $x_0 \in G$ with $0 < \norm{\xi(x_0)}_{\R/\Z} < \rho'$, then adding any element of $B(S', \rho')$ to any $x \in G$ will not change the value of $\xi(x)$, and so we may ignore such $\xi$.
	
	Otherwise, there exists $x_0 \in G$ with $0 < \norm{\xi(x_0)}_{\R/\Z} < \rho'$. In this case, since the map $x \mapsto x + x_0$ is measure-preserving, the sets $$\{x \in G: \xi(x) \in [(k-1)\xi(x_0), k \xi(x)_o)\}$$ are of equal measure, and so the exceptional set 
		$$E_\xi := \{x \in G: \norm{\xi(x) - \rho}_{\R/\Z} \le \rho' \} \cup \{x \in G: \norm{\xi(x) + \rho}_{\R/\Z} \le \rho' \}$$
		has measure bounded by $O(\rho') \lesssim \epsilon/|S|C_{|S|, \rho}$. 
	
	By the triangle inequality, we have $x+B(S,\rho') \subseteq B(S,\rho)$ as long as $x$ is not contained in $\bigcup_{\xi \in S} E_\xi$. A simple union bound tells us that this set has size $\lesssim \epsilon_0/|B(S,\rho)|$, as desired.
\end{proof}
Reproducing the argument for the second half of this theorem when $B(S,\rho)$ is replaced by an arbitrary translate of an arbitrary part $p \in \mf B(S,\rho)$, we can nearly obtain the same conclusion. However, Bohr parts may have wildly varying size. If we replace $C_{|S|,\rho}$ by $\epsilon_0/|\mf B(S,\rho)| = \epsilon_0/\rho^{-|S|}$, then the conclusion will hold for all Bohr parts with size at least an $\epsilon_0$-fraction of the average size of a Bohr part, which will be plenty. 

This proposition and observation allow us to make the following useful decompositions. 
\begin{corollary}\label{cor:box approx}
	Fix $\epsilon_0, |S|, \rho$, and let $B$ be either the Bohr set $B(S,\rho)$ or a part of the Bohr partition $\mf B(S, \rho)$ with size $\mu(B) \ge \epsilon_0/|\mf B(S,\rho)|$. There exists some $C$ depending only on $\epsilon_0, |S|, \rho$ such that for any $z_0 \in G$, the set 
	$$
	B_{z_0} := \{(x,y) \in G\times G: x+y+z_0 \in B\}
	$$
	can be expressed as the disjoint union of at most $C$ boxes and a remainder of measure at most $\epsilon_0 \mu(B)$.
\end{corollary}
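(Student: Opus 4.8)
The plan is to exploit that membership in $B_{z_0}$ depends on $(x,y)$ only through the values $\bigl(\xi(x)\bigr)_{\xi \in S}$ and $\bigl(\xi(y)\bigr)_{\xi \in S}$, and to build the box decomposition one character at a time. Here a \emph{box} means a product $X \times Y$ of measurable subsets of $G$; our boxes will in fact be products of generalized Bohr sets of the form $\{x : \xi(x) \in J_\xi \text{ for all } \xi \in S\}$. If $\rho \ge 1/2$ then $B = G$ and $B_{z_0} = G \times G$ is a single box, so assume $\rho < 1/2$; then for each $\xi$ the set $I_\xi := (-\rho - \xi(z_0), \rho - \xi(z_0))$ is an arc of length $2\rho$ in $\R/\Z$, and unwinding the definitions gives
$$
B_{z_0} = \bigcap_{\xi \in S} D_\xi, \qquad D_\xi := \bigl\{(x,y) \in G\times G : \xi(x) + \xi(y) \in I_\xi\bigr\}.
$$

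The key regularity input is that for each character $\xi$, the pushforward $\lambda_\xi := \xi_* \mu$ is translation invariant on the compact subgroup $\xi(G) \le \R/\Z$, hence equals its normalized Haar measure. So $\xi(G)$ is either all of $\R/\Z$, with $\lambda_\xi$ Lebesgue measure, or a finite cyclic group $\frac1{m_\xi}\Z/\Z$, with $\lambda_\xi$ uniform on its $m_\xi$ points; in either case $\bigl|\lambda_\xi(J) - |J|\bigr| \le 1/m_\xi$ for every arc $J$ (reading $1/m_\xi = 0$ when $\xi(G) = \R/\Z$), and $\lambda_\xi$ is invariant under translation by any element of $\xi(G)$ — in particular by any value $\xi(x)$, $x \in G$.

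Now fix a parameter $K$ and approximate each $D_\xi$ separately. Partition $\R/\Z$ into arcs $\{A^\xi_\ell\}$: if $m_\xi \ge K$ (including $\xi(G) = \R/\Z$), take the $K$ arcs $[\ell/K, (\ell+1)/K)$; if $m_\xi < K$, take the $m_\xi$ singletons of $\frac1{m_\xi}\Z/\Z$ (plus the $\lambda_\xi$-null complement). Set $X^\xi_\ell := \{x : \xi(x) \in A^\xi_\ell\}$, let $\mathrm{core}^\xi_\ell := \bigcap_{a \in A^\xi_\ell}(I_\xi - a)$ (an arc of length $2\rho - |A^\xi_\ell|$), and set $Y^\xi_\ell := \{y : \xi(y) \in \mathrm{core}^\xi_\ell\}$. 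Then $E_\xi := \bigsqcup_\ell X^\xi_\ell \times Y^\xi_\ell \subseteq D_\xi$, since $\xi(x) \in A^\xi_\ell$ and $\xi(y) \in \mathrm{core}^\xi_\ell$ force $\xi(x) + \xi(y) \in I_\xi$. Using the translation invariance of $\lambda_\xi$ under the values $\xi(x)$, a short Fubini computation gives
$$
(\mu\times\mu)(D_\xi \setminus E_\xi) = \lambda_\xi(I_\xi) - \sum_\ell \lambda_\xi(A^\xi_\ell)\,\lambda_\xi(\mathrm{core}^\xi_\ell),
$$
which equals $0$ in the singleton case (there $\mathrm{core}^\xi_\ell = I_\xi - a$ is a $\lambda_\xi$-measure-preserving translate of $I_\xi$, and the decomposition is exact) and is at most $O(1/K)$ in the $K$-arc case, where it is crucial that this route is taken only when $m_\xi \ge K$, so that every arc and every core has $\lambda_\xi$-measure within $O(1/K)$ of its length.

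Finally, take the common refinement $E := \bigcap_{\xi \in S} E_\xi$. Expanding over tuples $\vec\ell = (\ell_\xi)_{\xi \in S}$,
$$
E = \bigsqcup_{\vec\ell} \Bigl(\bigcap_{\xi} X^\xi_{\ell_\xi}\Bigr) \times \Bigl(\bigcap_{\xi} Y^\xi_{\ell_\xi}\Bigr),
$$
and since $\{X^\xi_\ell\}_\ell$ partitions $G$ for each $\xi$, the first factors are pairwise disjoint across distinct $\vec\ell$; hence this is a \emph{disjoint} union of at most $K^{|S|}$ measurable boxes, all contained in $B_{z_0}$. Moreover
$$
(\mu\times\mu)(B_{z_0} \setminus E) \le \sum_{\xi \in S} (\mu\times\mu)(D_\xi \setminus E_\xi) = O(|S|/K).
$$
Since $\mu(B)$ is bounded below by $C_{|S|,\rho}$ (Fact \ref{fact:volume bound}) when $B = B(S,\rho)$, and by $\epsilon_0/|\mf B(S,\rho)|$ by hypothesis when $B$ is a Bohr part, choosing $K$ to be a suitable constant multiple of $|S|/(\epsilon_0 \mu(B))$ makes the remainder at most $\epsilon_0 \mu(B)$, and then $C := K^{|S|}$ depends only on $\epsilon_0, |S|, \rho$, as required; the construction and all bounds are manifestly uniform in $z_0$. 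The main obstacle is the third step: the naive ``$K$ equal arcs'' approximation fails when $m_\xi$ is finite but only moderately larger than $K$ (an $O(K/m_\xi)$ error term intrudes), and the remedy is to switch to the exact, discreteness-based singleton decomposition whenever $m_\xi$ is small — organizing this dichotomy so that the box count stays bounded by a constant is where the care lies.
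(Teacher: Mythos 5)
Your proof is correct, and it takes a genuinely different (though spiritually related) route from the paper's. The paper invokes Proposition \ref{prop:sets with partitions} (part 2) with a fine Bohr partition $\mf B' = \mf B(S,\rho')$: for a suitably small $\rho'$, all but an $\epsilon_0$-fraction of $(x,y) \in B_{z_0}$ have the property that the box of $\mf B' \times \mf B'$ containing $(x,y)$ lies entirely inside $B_{z_0}$, so the $|\mf B'|^2$ boxes of $\mf B' \times \mf B'$ contained in $B_{z_0}$ give the decomposition. You instead work character by character, partitioning $\R/\Z$ for each $\xi \in S$, pairing each $x$-arc with a shrunken ``core'' $y$-arc, and taking the common refinement over $S$; the dichotomy you introduce (singletons when the image $\xi(G)$ is a small finite cyclic group, $K$ equal arcs otherwise) plays the same role as the case analysis inside the paper's Proposition \ref{prop:sets with partitions} (whether or not there exists $x_0$ with $0 < \norm{\xi(x_0)}_{\R/\Z} < \rho'$). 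Your approach buys self-containedness and a transparent, Fubini-level error computation --- the remainder measure is an explicit telescoping of per-character errors --- at the cost of reproducing part of the machinery Proposition \ref{prop:sets with partitions} already encapsulates; the paper's approach is shorter given that the proposition is already proved and reused elsewhere. Two small remarks: your boxes are disjoint because their $x$-factors $\bigcap_\xi X^\xi_{\ell_\xi}$ are disjoint (the $y$-factors need not be, which is fine since ``box'' just means a product set), and you do need $K \gtrsim 1/\rho$ for the cores to be nonempty, but this is absorbed into the choice $K \asymp |S|/(\epsilon_0 \mu(B))$ since $\mu(B)$ is itself bounded by $O(\rho^{|S|})$ or smaller.
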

\begin{proof}
	Let us temporarily fix $y$. We will use a fine partition $\mf B' = \mf B(S, \rho')$; the boxes of $\mf B' \times \mf B'$ should mostly cover our set. By Proposition \ref{prop:sets with partitions} and the subsequent comments, as long as $\mf \rho'$ is sufficiently small in terms of $\epsilon_0$, $|S|$, and $\rho$, for all but an $\epsilon_0/2$-fraction of $x \in y+z_0+B(S,\rho')$, we have that the part of $\mf B'$ containing $x$ lies entirely within $y+z_0+B$. Varying $y$, this statement holds for the $x$-coordinate of all but an $\epsilon_0$-fraction of pairs $(x,y)$ in $B_{z_0}$. We can repeat the same argument for the $y$-coordinate. Combining these together, for all but am $\epsilon_0$-fraction of $(x,y)$ in $B_{z_0}$, we have that the box of $\mf B' \times \mf B'$ which contains $(x,y)$ is itself fully contained within $B_{z_0}$. Consequently, $B_{z_0}$ may be partitioned into a union of at most $C := |\mf B'|^2$ boxes and an exceptional set of measure at most $\epsilon_0$. Since $|\mf B'|$ is bounded in terms of $\epsilon_0$, $|S|$, and $\rho$, this completes the proof of the corollary.
\end{proof}

Integrating the pointwise statements of Proposition \ref{prop:sets with partitions}, we can obtain a second useful corollary.
\begin{corollary}\label{cor:set convolution}
	Fix $\epsilon_0, \delta,\delta',\rho > 0$ and $S \subseteq S' \subseteq \hat G$. We let $\mf B = \mf B(S,\delta)$, $\mf B' = \mf B(S', \delta')$, and $B = B(S,\rho)$. Furthermore, assume 
	$$\rho \le \frac{\epsilon_0^2\delta}{|S|} \quad \text{and} \quad
	\delta' \le \frac{\epsilon_0C_{|S|,\rho}}{|S|}.$$  Then for any $f:G \to[0,1]$ we have:  
	
		\begin{equation}\label{eqn:set conv eqn 1}
		\big\Vert f|_{\mf B} - \mu_{B} * f|_{\mf B} \big\Vert_{L^2} \lesssim \epsilon_0.
		\end{equation}

		\begin{equation}\label{eqn:set conv eqn 2}
		\big\Vert\mu_B * f - \mu_B * f|_{\mf B'}\big\Vert_{L^2} \lesssim \epsilon_0.
		\end{equation}
\end{corollary}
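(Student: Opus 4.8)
The plan is to deduce both inequalities from the pointwise statements of Proposition~\ref{prop:sets with partitions} by controlling the exceptional sets crudely. The one observation that makes this work is that Bohr sets are symmetric, $-B = B$, so for any $h : G \to \R$ one has $\mu_B * h(x) = \frac{1}{\mu(B)}\int_{x+B} h(u)\,du$: convolving with $\mu_B$ just replaces $h$ by its average over the translate $x+B$. Thus for \eqref{eqn:set conv eqn 1} I must show this averaging barely changes the $\mf B$-conditional expectation, and for \eqref{eqn:set conv eqn 2} that it barely sees the difference $f - f|_{\mf B'}$.

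For \eqref{eqn:set conv eqn 1}, write $h = f|_{\mf B}$, which is constant on each part of $\mf B = \mf B(S,\delta)$. I would invoke Part 1 of Proposition~\ref{prop:sets with partitions} with its parameter taken to be $\epsilon_0^2$ --- legitimate since the hypothesis supplies $\rho \le \epsilon_0^2\delta/|S|$ --- to get that for all but an $O(\epsilon_0^2)$-fraction of $x \in G$ the translate $x + B$ is contained in a single part $P$ of $\mf B$. For such a ``good'' $x$, both $x$ itself (as $0 \in B$) and every $u \in x+B$ lie in $P$, where $h$ takes the constant value $h(x)$; hence $\mu_B * h(x) = h(x)$ and the integrand of $\norm{h - \mu_B*h}_{L^2}^2$ vanishes there. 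On the $O(\epsilon_0^2)$-fraction of bad $x$, both $h(x)$ and $\mu_B * h(x)$ lie in $[0,1]$ (a conditional expectation, respectively an average, of a $[0,1]$-valued function), so the integrand is at most $1$. Integrating yields $\norm{h - \mu_B*h}_{L^2}^2 \lesssim \epsilon_0^2$, and \eqref{eqn:set conv eqn 1} follows by taking square roots.

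For \eqref{eqn:set conv eqn 2}, linearity gives $\mu_B * f - \mu_B * f|_{\mf B'} = \mu_B * g$ where $g := f - f|_{\mf B'}$, and the point is that $g$ integrates to $0$ over every part of $\mf B' = \mf B(S',\delta')$, since $f|_{\mf B'}$ is exactly the part-wise average of $f$. Fixing $x$ and applying Part 2 of Proposition~\ref{prop:sets with partitions} (its hypothesis $\delta' \le \epsilon_0 C_{|S|,\rho}/|S|$ is assumed), the set of $y \in B$ whose $\mf B'$-part is not entirely inside $x+B$ has measure $O(\epsilon_0\mu(B))$; unwinding the definitions, this means $x+B$ is a disjoint union of some whole parts of $\mf B'$ together with a remainder $R_x \subseteq x+B$ with $\mu(R_x) = O(\epsilon_0\mu(B))$. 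Since $\int g$ vanishes on each whole part, $\mu_B * g(x) = \frac{1}{\mu(B)}\int_{R_x} g$, which is $O(\epsilon_0)$ uniformly in $x$ because $|g| \le 1$. Hence $\norm{\mu_B * g}_{L^\infty} \lesssim \epsilon_0$, and therefore $\norm{\mu_B * g}_{L^2} \lesssim \epsilon_0$ as well, since $\mu$ is a probability measure.

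The argument is essentially bookkeeping once Proposition~\ref{prop:sets with partitions} is in hand; the only thing to watch is that \eqref{eqn:set conv eqn 1} needs the exceptional fraction squared down to $O(\epsilon_0^2)$ to survive the $L^2$ norm --- which is exactly why the hypothesis on $\rho$ carries an $\epsilon_0^2$ --- whereas \eqref{eqn:set conv eqn 2} comes out with a pointwise $O(\epsilon_0)$ bound and needs no such care. I expect the only mildly delicate point to be checking that ``most $\mf B'$-parts meeting $x+B$ lie inside it'' upgrades cleanly to ``$x+B$ is, up to measure $O(\epsilon_0\mu(B))$, a union of $\mf B'$-parts,'' which is immediate from the definition of the Bohr partition.
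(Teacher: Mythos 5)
Your proof is correct and matches the paper's own argument step for step: for \eqref{eqn:set conv eqn 1}, apply Part~1 of Proposition~\ref{prop:sets with partitions} with parameter $\epsilon_0^2$ to conclude that $f|_{\mf B}$ and $\mu_B * f|_{\mf B}$ agree off a set of measure $O(\epsilon_0^2)$ and bound the $L^2$-norm by the $L^\infty$-bound of $1$ on that exceptional set; for \eqref{eqn:set conv eqn 2}, use Part~2 to decompose $x+B$ into whole $\mf B'$-parts plus a remainder of measure $O(\epsilon_0\mu(B))$, note that $f - f|_{\mf B'}$ integrates to zero over whole parts, and deduce a pointwise $O(\epsilon_0)$ bound on $\mu_B*(f - f|_{\mf B'})$. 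Your write-up also usefully makes explicit two small points the paper leaves implicit (that $0 \in B$ so the good $x$ lies in the same part as all of $x+B$, and that the complement of the bad set in $x+B$ really is a union of whole $\mf B'$-parts), but the substance is identical.
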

Intuitively, (\ref{eqn:set conv eqn 1}) says that a function which is constant on a coarse Bohr partition is approximately constant under convolution with a small Bohr set, and (\ref{eqn:set conv eqn 2}) states that convolving a function with a Bohr set is approximately the same as first projecting onto a much finer Bohr partition, and then performing the convolution.
\begin{proof}
		By the first half of Proposition \ref{prop:sets with partitions}, the set 
		$$
		\{x: f|_{\mf B}(x) \neq f|_{\mf B} * \mu_{B(S,\rho)}(x)\}
		$$
		has measure bounded by $\epsilon_0^2$. 
		As the difference of two functions with range in $[0,1]$, we have 
		$$|f|_{\mf B}(x) - f|_{\mf B} * \mu_{B}(x)|^2 \le 1.$$
		Since this function is nonzero on a set of measure at most $\epsilon_0^2$,  (\ref{eqn:set conv eqn 1}) follows immediately.
		
		To show (\ref{eqn:set conv eqn 2}) we apply the second half of Proposition \ref{prop:sets with partitions}. For any $x \in G$, this allows us to partition $x+B(S,\rho)$ into the union of some Bohr parts $b \in \mf B$ and an exceptional set $E$ with $\mu(E) \le \epsilon_0^2 \mu(B)$. Observing that the integral of $f$ equals the integral of $f|_{\mf B}$ on such a part $b$, we obtain:
		\begin{align*}
		\mu_B * f - \mu_B * f|_{\mf B'}(x) &= \int_{y \in G}(f - f|_{\mf B'})(x+y)~\mu_{B(S,\rho)}(-y) ~dy\\
		&= \frac{1}{\mu(B)}\int_{y' \in x+B(S,\rho)}(f-f|_{\mf B'})(y') ~dy'\\
		&= \frac{1}{\mu(B)}\int_{y' \in E}(f -f|_{\mf B'})(y') ~dy'.
		\end{align*}
		We take absolute values. The integrand has absolute value bounded by 1, and is supported on a set of measure at most $\epsilon_0 \mu(B)$. We deduce:
		\begin{equation*}
		\big\Vert\mu_B * f - \mu_B * f|_{\mf B'}\big\Vert_{L^2} \le \big\Vert\mu_B * f - \mu_B * f|_{\mf B'}\big\Vert_{L^\infty} \le \epsilon_0.
		\end{equation*}
\end{proof}
\section{Regularity lemma}
	We will require two types of regularity lemmas. The first allows us to decompose a function, or a set of functions, into three parts: one that is constant on a Bohr partition, one that is small in $L^1$, and one that is Fourier uniform. The second type of regularity is standard strong regularity for graphs or graphons.
	 
	\begin{lemma}\label{lem:strong bohr regularity}
		Fix $\epsilon$, $m$, and $F:\R\to \R$, a rapidly growing function whose choice may depend on $\epsilon$ and $m$. Then there exist constants $D, R$ such that the following holds. For every set $\mc I$ of functions $I : G \to [0,1]$ with cardinality $|\mc I| = m$, there exists a Bohr partition $\mf B = \mf B(S,\rho)$ with $|S| < D$, $\rho > R$, and a decomposition 
		$$
		I = I_0+I_1+I_2
		$$
		for each $I \in \mc I$, such that:
		\begin{equation*}
			I_0 = I|_{\mf B},\qquad 
			\norm{I_1}_{L^2} \lesssim \frac{1}{F(1)},\quad\text{and}
			\quad \big\Vert\widehat{ I_2\cdot \ind_b}\big\Vert_{\ell^\infty} \lesssim \frac{1}{F(\delta_i\inv |\mf B|)} ~\text{for all}~ b \in \mf B.
		\end{equation*}
	\end{lemma}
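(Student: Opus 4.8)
The plan is to run a ``strong'' energy-increment regularity argument in the world of Bohr partitions, parallel to the arithmetic regularity lemma of Green and Green--Tao but with Bohr partitions playing the role of partitions into subspace cosets, and to produce the dependence on the growth function $F$ by pigeonholing along a long sequence of successive refinements rather than by iterating to a ``regular'' partition (which need not happen within a bounded number of steps once the uniformity threshold is tied to $F$ of the current complexity).

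First I would build a tower of Bohr partitions $\mf B_0 \preceq \mf B_1 \preceq \cdots \preceq \mf B_N$, each refining the previous, with $\mf B_0$ trivial. Given $\mf B_i = \mf B(S_i,\rho_i)$, let $c_i := \rho_i^{-1}|\mf B_i|$ (the quantity inside $F$ in the statement) and set a threshold $\theta_i := 1/F(c_i)$. To pass to $\mf B_{i+1}$: adjoin to the frequency set every $\xi \in \hat G$ with $\big\Vert\widehat{(I - I|_{\mf B_i})\ind_b}(\xi)\big\Vert \ge \theta_i$ for some $I \in \mc I$ and some part $b \in \mf B_i$; then choose $\rho_{i+1}$ small enough — in terms of $\epsilon, m, c_i$, and invoking Proposition~\ref{prop:sets with partitions} and Corollaries~\ref{cor:box approx}, \ref{cor:set convolution} — that $\mf B_{i+1} := \mf B(S_{i+1},\rho_{i+1})$ genuinely refines $\mf B_i$ (e.g. $\rho_{i+1}^{-1}$ a multiple of $\rho_i^{-1}$) and that $e^{2\pi i\xi(\cdot)}$ varies by at most $\ll\theta_i$ across any part of $\mf B_{i+1}$ for every newly adjoined $\xi$. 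Parseval bounds the number of characters adjoined at step $i$ by $m\theta_i^{-2}$, so $c_{i+1}$ is bounded by an explicit rapidly-growing function of $c_i$; iterating $N$ times (with $N$ the constant fixed below) gives $c_N,\rho_N^{-1}$ bounded by a constant depending only on $\epsilon, m, F$, which furnishes the $D,R$ of the statement.

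Next, track the joint energy $\mc E_i := \sum_{I\in\mc I}\norm{I|_{\mf B_i}}_{L^2}^2 \in [0,m]$. Since refining a partition never decreases the $L^2$-norm of a conditional expectation, $(\mc E_i)$ is non-decreasing, and for nested partitions one has the Pythagorean identity $\mc E_{i+1} - \mc E_i = \sum_{I}\norm{I|_{\mf B_{i+1}} - I|_{\mf B_i}}_{L^2}^2$. Taking $N$ a constant large enough (say $N \gtrsim m F(1)^2$, or $N\gtrsim m/\epsilon^2$ if one prefers to pigeonhole over two consecutive gaps at once), there is an index $i^*$ with $\mc E_{i^*+1} - \mc E_{i^*}$ as small as desired, hence $\norm{I|_{\mf B_{i^*+1}} - I|_{\mf B_{i^*}}}_{L^2} \lesssim 1/F(1)$ for every $I$. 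Put $\mf B := \mf B_{i^*}$ and $I_0 := I|_{\mf B}$, $I_1 := I|_{\mf B_{i^*+1}} - I|_{\mf B}$, $I_2 := I - I|_{\mf B_{i^*+1}}$; then $I = I_0 + I_1 + I_2$, $I_0 = I|_{\mf B}$, and $\norm{I_1}_{L^2}\lesssim 1/F(1)$ are all immediate.

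The heart of the matter is the Fourier-uniformity of $I_2$ on each part $b \in \mf B$. Writing $\widehat{I_2\ind_b}(\xi) = \widehat{(I - I|_{\mf B})\ind_b}(\xi) - \widehat{(I|_{\mf B_{i^*+1}} - I|_{\mf B})\ind_b}(\xi)$: if $\xi$ was adjoined at step $i^*$ it is resolved by $\mf B_{i^*+1}$, so since $I - I|_{\mf B_{i^*+1}}$ has mean zero on each part of $\mf B_{i^*+1}$ while $e^{2\pi i\xi}$ is essentially constant there, $\big\Vert\widehat{I_2\ind_b}(\xi)\big\Vert = \big\Vert\widehat{(I - I|_{\mf B_{i^*+1}})\ind_b}(\xi)\big\Vert = O(\rho_{i^*+1}) \ll 1/F(c_{i^*})$; if $\xi$ was not adjoined, $\big\Vert\widehat{(I - I|_{\mf B})\ind_b}(\xi)\big\Vert < \theta_{i^*} = 1/F(c_{i^*})$ by the choice of threshold. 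I expect the genuine difficulty to be bounding the remaining piece $\widehat{(I|_{\mf B_{i^*+1}} - I|_{\mf B})\ind_b}(\xi)$ by $\ll 1/F(c_{i^*})$ — rather than merely by $\norm{I_1}_{L^2}\lesssim 1/F(1)$, which is far too weak for the tiny target. In the finite-field model $\widehat{\ind_b}$ is supported on a subgroup and this term is essentially free; over a general compact group $\ind_b$ smears Fourier mass, so one must exploit that $I|_{\mf B_{i^*+1}} - I|_{\mf B}$ is constant on the fine partition $\mf B_{i^*+1}$ — hence, via Corollary~\ref{cor:box approx}, approximable up to negligible $L^2$-error by a bounded combination of box indicators adapted to $\mf B_{i^*+1}$ — so that its Fourier content lives essentially on the frequencies $\mf B_{i^*+1}$ resolves (where it agrees with $\widehat{(I - I|_{\mf B})\ind_b}$ up to $O(\rho_{i^*+1})$ and so is controlled by the threshold) while being genuinely cancelled at the unresolved frequencies. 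Making this cancellation beat $1/F(c_{i^*})$ uniformly in $\xi$ — by balancing the thresholds $\theta_i$, the radii $\rho_{i+1}$, and possibly pigeonholing over two consecutive refinements so that one further refinement of $\mf B_{i^*+1}$ also gains negligible energy — is, I believe, the main technical obstacle, and essentially the only place where the Bohr-set preliminaries of Section~2 are indispensable.
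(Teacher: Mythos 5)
Your skeleton — a tower of Bohr partitions built by adjoining large Fourier coefficients, refining with energy-increment, and pigeonholing a step $i^*$ with small energy gap $\norm{I|_{\mf B_{i^*+1}} - I|_{\mf B_{i^*}}}_{L^2} \lesssim 1/F(1)$ — is exactly the paper's. The divergence, which you correctly flag as "the heart of the matter," is your choice of terminal decomposition, and this is where your argument has a genuine gap rather than a missing detail. You set $I_1 := I|_{\mf B_{i^*+1}} - I|_{\mf B}$ and $I_2 := I - I|_{\mf B_{i^*+1}}$, so that for $\xi$ not adjoined to $S_{i^*+1}$ you must control $\widehat{I_1 \cdot \ind_b}(\xi)$. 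Your hope is that, because $I_1$ is constant on the fine partition $\mf B_{i^*+1}$, its Fourier content is concentrated near the frequencies of $S_{i^*+1}$ and cancels at unresolved $\xi$. This is false in a general compact abelian group: $I_1 \cdot \ind_b = \sum_{b' \subseteq b} c_{b'} \ind_{b'}$ with $|c_{b'}| \le 1$, and the indicator of a Bohr part has no Fourier localization to the defining frequency set (there is no analogue of Poisson summation here). The only unconditional bound is $\big\Vert\widehat{I_1 \cdot \ind_b}\big\Vert_{\ell^\infty} \le \norm{I_1}_{L^2} \lesssim 1/F(1)$, which, as you observe yourself, is far too weak compared to the target $1/F(\delta_i^{-1}|\mf B|)$. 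Pigeonholing over two consecutive gaps does not repair this: the obstruction is not the size of the $L^2$ gap but the lack of pointwise Fourier control on a low-rank-structured function.

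The paper sidesteps this entirely by choosing a convolution-based split: $I_1 := I * \mu_{B_{i+1}} - I|_{P_i}$ and $I_2 := I - I * \mu_{B_{i+1}}$, where $B_{i+1} = B(S_{i+1},\rho_{i+1})$. This yields the factorization (up to lower-order corrections) $\widehat{I_2 \cdot \ind_b}(\xi) \approx \widehat{I \cdot \ind_b}(\xi)\bigl(1 - \widehat{\mu_{B_{i+1}}}(\xi)\bigr)$, and now the two cases are handled by the two factors separately: for $\xi \notin S_{i+1}$ the first factor is below the adjoining threshold, while for $\xi \in S_{i+1}$ one has $\xi(x) = O(\rho_{i+1})$ on the support of $\mu_{B_{i+1}}$, so $\widehat{\mu_{B_{i+1}}}(\xi) = 1 - O(\rho_{i+1})$ and the second factor is tiny. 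No cancellation in $\widehat{I_1\cdot\ind_b}$ is ever needed. The cost is that $\norm{I_1}_{L^2}$ is no longer immediate; the paper recovers it by a three-term triangle inequality comparing $I|_{P_i}$, $(I|_{P_i})*\mu_{B_{i+1}}$, $(I|_{P_{i+1}})*\mu_{B_{i+1}}$, and $I * \mu_{B_{i+1}}$, with the outer two differences controlled by Corollary~\ref{cor:set convolution} and the middle one by Plancherel applied to the energy gap. If you replace your partition-based split with this convolution-based one, your proof aligns with the paper's and the gap closes.
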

	
	The proof of this lemma will occupy the remainder of this subsection. For this lemma we use a procedure in which we will be constructing a sequence of Bohr sets $B(S_i, \rho_i)$. Each Bohr set will be accompanied by a Bohr partition $\mf B(S_i, \delta_i)$ with $\delta_i$ substantially smaller than $\rho_i$, but by a bounded amount. At each successive refinement, we regularize an increasingly large family of functions $\mc F_i$ with respect to the previous Bohr set. The procedure is as follows:
	\begin{enumerate}
		\item Initialize $S_0 = \emptyset$, $\rho_0 = 1$.
		\item Set $P_i = \mf B(S_i, \delta_i)$, where $1/\delta_i \ge F(1/\rho_i)$ is chosen to be an integer and, for $i \ge 1$, a multiple of $1/\delta_{i-1}$.
		\item Set $\mc F_{i}$ to be the set of pointwise products of functions $I \cdot \ind_p$, for all $I \in \mc I$ and $p \in P_i$.
		\item Set
		\begin{align*}
		S_{i+1} = S_i ~\cup~& \{\xi \in \hat G: \widehat f(\xi) \ge 1/F(|\mc F_i|/\delta_i)\text{~ for some } f \in \mc F_i \}.
		\end{align*}
		\item Set $\rho_{i+1} = 1/F(|S_{i+1}|/\delta_i)$, and $B_{i+1} = B(S_{i+1}, \rho_{i+1})$.
		\item If $\norm{I|_{P_{i+1}} - I|_{P_i}}_{L^2} > \frac{1}{F(1)}$, then increment $i$ to $i+1$, and return to step 1.
	\end{enumerate}
	Since each $P_{i+1}$ is a refinement of $P_i$, we see that $I|_{P_{i}} - I|_{P_{i - 1}}$ is constant on parts of $P_i$, whereas $I|_{P_{i+1}} - I|_{P_{i}}$ has integral 0 on such boxes. We obtain the following orthogonality:
	\begin{equation*}
	\angles{I|_{P_{i+1}} - I|_{P_{i}},I|_{P_{i}} - I|_{P_{i - 1}}} = 0,
	\end{equation*}
	and so we have the following telescoping sum:
	\begin{equation}\label{eqn:telescoping sum}
	\sum_{i = 1}^t \norm{I|_{P_{i+1}} - I|_{P_i}}_{L^2}^2 = \norm{I|_{P_{t+1}} - I|_{P_{1}}}_{L^2}^2 \le 1
	\end{equation}
	Consequently there must be some $i \le mF(1)^2$ (which in turn is bounded in terms of $\epsilon,m$) for which every $I \in \mc I$ satisfies 
	\begin{equation}\label{eqn:l2 energy gap}
	\norm{I|_{P_{i+1}} - I|_{P_i}}_{L^2} \le \frac{1}{F(1)}.
	\end{equation}
	
	Thus the procedure terminates at such a step $i$.  We now decompose each $I \in \mc I$:
	\begin{equation*}
	I = I_0+I_1+I_2,
	\end{equation*}
	where
	\begin{equation*}
	I_0 = I|_{P_i},
	\end{equation*}
	\begin{equation*}
	I_1 = I * \mu_{B_{i+1}} - I|_{P_i},
	\end{equation*}
	\begin{equation*}
	\text{and}\qquad I_2 = I - I * \mu_{B_{i+1}}.
	\end{equation*}
	
	We begin by showing that each restriction of $f_2$ to a part $p \in P_i$ has small Fourier coefficients.
	\begin{lemma} We have
		\begin{equation}
		\norm{\widehat{ I_2 \cdot \ind_{p}}}_{\ell^\infty} \lesssim \frac{1}{F(\rho_i\inv|\mc F_i|)},
		\end{equation}
		for every $I \in \mc I$ and $p \in P_i$.
	\end{lemma}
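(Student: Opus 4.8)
The plan is to leverage that $f := I \cdot \ind_p$ is literally one of the functions in $\mc F_i$, so its Fourier transform is controlled by Step 4 of the construction, and to transfer this control to $I_2 \cdot \ind_p = (I - I*\mu_{B_{i+1}})\cdot\ind_p$ by correcting for the fact that restricting to the part $p$ and convolving with the tiny Bohr set $B_{i+1}$ very nearly commute. Concretely, I would write
$$
I_2 \cdot \ind_p \;=\; \big(f - f * \mu_{B_{i+1}}\big) \;+\; E, \qquad E := (I \cdot \ind_p) * \mu_{B_{i+1}} - (I * \mu_{B_{i+1}}) \cdot \ind_p,
$$
and bound the $\ell^\infty$ Fourier norm of each summand separately, using $\big\Vert\widehat g\big\Vert_{\ell^\infty} \le \norm{g}_{L^1}$ for the error.

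For the first summand, its Fourier transform at $\xi$ equals $\widehat f(\xi)\big(1 - \widehat{\mu_{B_{i+1}}}(\xi)\big)$. If $\xi \notin S_{i+1}$, then $f \in \mc F_i$ and the defining property of $S_{i+1}$ give $|\widehat f(\xi)| < 1/F(|\mc F_i|/\delta_i)$, while $|1 - \widehat{\mu_{B_{i+1}}}(\xi)| \le 2$. If $\xi \in S_{i+1}$, then since $B_{i+1} = B(S_{i+1},\rho_{i+1})$ we have $\norm{\xi(x)}_{\R/\Z} < \rho_{i+1}$ for all $x \in B_{i+1}$, hence $|1 - \widehat{\mu_{B_{i+1}}}(\xi)| \le 2\pi\rho_{i+1}$, while $|\widehat f(\xi)| \le \norm{f}_{L^1} \le 1$. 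Thus $\big\Vert\widehat{f - f*\mu_{B_{i+1}}}\big\Vert_{\ell^\infty} \lesssim \max\big(1/F(|\mc F_i|/\delta_i),\,\rho_{i+1}\big)$, which is $\lesssim 1/F(\rho_i\inv|\mc F_i|)$ by the parameter choices in Steps 2 and 5 (Step 2 forces $1/\delta_i \ge \rho_i\inv$, so $|\mc F_i|/\delta_i \ge \rho_i\inv|\mc F_i|$; the bound on $\rho_{i+1}$ is where the rapid growth of $F$ is invoked).

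For the error $E$, I would apply Proposition \ref{prop:sets with partitions}(1) with the coarse partition $P_i = \mf B(S_i,\delta_i)$ and radius $\rho_{i+1}$: since $F$ grows rapidly, $\rho_{i+1}$ is far below $\epsilon_0\delta_i/|S_i|$, so for all but an $O(\epsilon_0)$-fraction of $x \in G$ the translate $x + B_{i+1} \subseteq x + B(S_i,\rho_{i+1})$ lies inside a single part of $P_i$. For such $x$ one checks by hand that $(I\cdot\ind_p)*\mu_{B_{i+1}}(x) = (I*\mu_{B_{i+1}})(x)\ind_p(x)$: if $x \in p$, the enclosing part of $P_i$ is $p$ itself, so $\ind_p \equiv 1$ on $x+B_{i+1}$ and both sides equal the same average of $I$; if $x \notin p$, then $(x+B_{i+1})\cap p = \emptyset$, and both sides vanish. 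Hence $E$ is supported on a set of measure $O(\epsilon_0)$, and since $|E| \le 1$ pointwise, $\norm{E}_{L^1} = O(\epsilon_0)$, so $\big\Vert\widehat E\big\Vert_{\ell^\infty} = O(\epsilon_0)$. Choosing $\epsilon_0$ of order $1/F(\rho_i\inv|\mc F_i|)$ (legitimate once $F$ is taken fast-growing enough that $\rho_{i+1} \le \epsilon_0\delta_i/|S_i|$ still holds) and combining with the previous paragraph gives $\big\Vert\widehat{I_2\cdot\ind_p}\big\Vert_{\ell^\infty} \lesssim 1/F(\rho_i\inv|\mc F_i|)$.

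The main obstacle is not any single estimate but the parameter bookkeeping: one must check that the single rapidly-growing $F$, fixed before the construction runs, simultaneously forces $\rho_{i+1}$ small enough relative to $\delta_i$ to invoke Proposition \ref{prop:sets with partitions}, keeps both $\rho_{i+1}$ and the threshold $1/F(|\mc F_i|/\delta_i)$ at most $O(1/F(\rho_i\inv|\mc F_i|))$, and still leaves room for the error accuracy $\epsilon_0$. This is the standard ``the regularity function outruns everything defined below it'' argument; the genuinely new content here --- the near-commutation of restriction to $p$ and convolution with a much smaller Bohr set, packaged in $E$ --- is short once Proposition \ref{prop:sets with partitions} is in hand.
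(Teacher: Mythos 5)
Your proposal is correct and follows the same core strategy as the paper --- factor the Fourier coefficient into $\widehat{I\cdot\ind_p}(\xi)\bigl(1-\widehat{\mu_{B_{i+1}}}(\xi)\bigr)$ and split into the cases $\xi\in S_{i+1}$ (where $1-\widehat{\mu_{B_{i+1}}}(\xi)$ is tiny because $\rho_{i+1}$ is tiny) and $\xi\notin S_{i+1}$ (where $\widehat{I\cdot\ind_p}(\xi)$ is tiny by construction of $S_{i+1}$ and the membership $I\cdot\ind_p\in\mc F_i$). Both case analyses match the paper's estimates exactly.

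Where you diverge is actually an improvement in rigor. The paper writes $\widehat{I_2\cdot\ind_p}(\xi)=\widehat{I\cdot\ind_p}(\xi)\bigl(1-\widehat{\mu_{B_{i+1}}}(\xi)\bigr)$ as though it were an identity. It is not: unwinding $I_2=I-I*\mu_{B_{i+1}}$ gives $I_2\cdot\ind_p=(I\cdot\ind_p)-(I*\mu_{B_{i+1}})\cdot\ind_p$, and the latter term equals $(I\cdot\ind_p)*\mu_{B_{i+1}}$ only up to an error coming from the noncommutativity of restriction to $p$ and convolution with $\mu_{B_{i+1}}$. Your explicit error term $E=(I\cdot\ind_p)*\mu_{B_{i+1}}-(I*\mu_{B_{i+1}})\cdot\ind_p$ is precisely the discrepancy the paper silently absorbs, and your bound on it --- invoking Proposition~\ref{prop:sets with partitions}(1) with the coarse partition $P_i=\mf B(S_i,\delta_i)$ and radius $\rho_{i+1}$ to show that for all but an $O(\epsilon_0)$-measure set of $x$ the translate $x+B_{i+1}$ lies inside a single part of $P_i$, so that $E(x)=0$ there, hence $\Vert\widehat E\Vert_{\ell^\infty}\le\Vert E\Vert_{L^1}=O(\epsilon_0)$ --- is exactly the mechanism that makes the paper's ``expansion'' an honest approximation. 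Your check that both sides of the claimed near-identity agree pointwise when $x+B_{i+1}$ sits in one Bohr part (both equal $I*\mu_{B_{i+1}}(x)$ if that part is $p$, both vanish otherwise) is correct. You are also right that the only genuinely delicate point is the parameter bookkeeping: one must confirm the single rapidly-growing $F$ simultaneously makes $\rho_{i+1}$ small enough to invoke Proposition~\ref{prop:sets with partitions} with the desired $\epsilon_0$, and keeps all three contributions ($1/F(|\mc F_i|/\delta_i)$, $\rho_{i+1}$, and $\epsilon_0$) at most $O\bigl(1/F(\rho_i^{-1}|\mc F_i|)\bigr)$. That is a standard regularity-function argument and you flag it appropriately. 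In short: same approach as the paper, plus a correction of a genuine (if small) gap in the paper's stated factorization.
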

That is to say, each pointwise product $ I_2 \cdot \ind_{p}$ has Fourier coefficients that are arbitrarily small in terms of $m, |P_i|, \epsilon, \delta_i$.
\begin{proof}
	We expand 
	\begin{equation*}
	\widehat{ I_2 \cdot \ind_{p}}(\xi) = \widehat{I \cdot \ind_p}(\xi)\left(1 - \widehat\mu_{B_{i+1}}(\xi)\right).
	\end{equation*}
	Noting that $I \cdot \ind_p \in \mc F_i$ for $p \in P_i$, we see that if $\xi \notin B(S_{i+1}, \rho_{i+1})$ we necessarily have 
	\begin{equation}
	\left|\widehat{I \cdot \ind_p}(\xi)\right| \le \delta_i = 1/F( |\mc F_i|/ \delta_i).
	\end{equation} 
	We can bound $|\widehat\mu_{B_{i+1}}(\xi)|$ by 1 as $\mu_{B_{i+1}}$ is defined to have total mass 1. Consequently, we bound $|1 - \widehat\mu_{B_{i+1}}(\xi)|$ by 2, and obtain the claimed inequality in this case.
	
	Otherwise, we have $\xi \in S_{i+1}$. We begin by noting that, trivially, $\mu_{B_{i+1}}$ is supported on $B_{i+1} = B(S_{i+1}, \rho_{i+1})$. For all $x$ in this support, by definition $\xi(x) \le \rho_{i+1} \le 1/F( m|P_i|/ \epsilon\delta_i)$. Consequently we have $\exp(2\pi i \xi(x)) = 1 - O(1/F( m|P_i|/ \epsilon\delta_i))$. Since the Fourier coefficient $\widehat\mu_{B_{i+1}}(\xi)$ is an expectation of such exponentials over $x \in B_{i+1}$, it too must be $1 - O(1/F( |\mc F_i|/ \delta_i))$. Bounding the Fourier coefficient $|\widehat{I \cdot \ind_p}(\xi)|$ by 1, the claim follows in this case as well. 
\end{proof}
	\begin{lemma} We have
		\begin{equation*}
		\norm{I_1}_{L^2} \lesssim \frac{1}{F(1)}.
		\end{equation*}
	\end{lemma}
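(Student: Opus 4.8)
The plan is to bound $I_1 = I*\mu_{B_{i+1}} - I|_{P_i}$ in $L^2$ by interpolating through the finer projection $I|_{P_{i+1}}$, using that the radius $\rho_{i+1}$ of the Bohr set $B_{i+1}$ lies between the mesh sizes $\delta_i$ and $\delta_{i+1}$ of the nested partitions $P_i \subseteq P_{i+1}$ and that $S_i \subseteq S_{i+1}$. I would write
\[
I*\mu_{B_{i+1}} - I|_{P_i} \;=\; \big(I - I|_{P_{i+1}}\big)*\mu_{B_{i+1}} \;+\; \big(I|_{P_{i+1}} - I|_{P_i}\big)*\mu_{B_{i+1}} \;+\; \big(I|_{P_i}*\mu_{B_{i+1}} - I|_{P_i}\big),
\]
and bound each of the three terms by $O(\epsilon_0) + O(1/F(1))$, then take $\epsilon_0 = 1/F(1)$.

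For the first term I apply (\ref{eqn:set conv eqn 2}) with $f = I$, $B = B_{i+1} = B(S_{i+1},\rho_{i+1})$, and $\mf B' = P_{i+1} = \mf B(S_{i+1},\delta_{i+1})$: the hypothesis $S \subseteq S'$ holds since both frequency sets equal $S_{i+1}$, any $\delta$ with $\rho_{i+1}\le\epsilon_0^2\delta/|S_{i+1}|$ works since $\mf B$ does not enter the conclusion, and $\delta_{i+1}\le\epsilon_0 C_{|S_{i+1}|,\rho_{i+1}}/|S_{i+1}|$ follows from $1/\delta_{i+1}\ge F(1/\rho_{i+1})$ once $F$ grows fast enough; this term is then $\lesssim\epsilon_0$. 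The middle term is handed to us: convolution against the probability measure $\mu_{B_{i+1}}$ is an $L^2$ contraction (Young's inequality, or Plancherel with $\norm{\widehat{\mu_{B_{i+1}}}}_{\ell^\infty}\le 1$), so it is at most $\norm{I|_{P_{i+1}} - I|_{P_i}}_{L^2}\le 1/F(1)$, which is exactly the termination inequality (\ref{eqn:l2 energy gap}). For the third term, $I|_{P_i}$ is constant on the coarse partition $P_i = \mf B(S_i,\delta_i)$, and since $S_i\subseteq S_{i+1}$ we have $B_{i+1}\subseteq B(S_i,\rho_{i+1})$ with $\rho_{i+1}$ far smaller than $\delta_i$; the argument proving (\ref{eqn:set conv eqn 1}) applies with $B_{i+1}$ in place of $B(S_i,\rho_{i+1})$, because that argument only uses that $x+B(S_i,\rho_{i+1})$, hence also $x+B_{i+1}$, lies in a single part of $P_i$ for all but an $O(\epsilon_0^2)$-fraction of $x$ (Part 1 of Proposition \ref{prop:sets with partitions}), a fraction on which the difference vanishes. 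So the third term is $\lesssim\epsilon_0$ as well.

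Combining, $\norm{I_1}_{L^2}\lesssim\epsilon_0 + 1/F(1)$, and setting $\epsilon_0 = 1/F(1)$ gives $\norm{I_1}_{L^2}\lesssim 1/F(1)$. I do not expect a genuine obstacle; the only thing to watch is the bookkeeping that makes the quantitative hypotheses of Corollary \ref{cor:set convolution} usable — that $\rho_{i+1} = 1/F(|S_{i+1}|/\delta_i)$ is small enough relative to $\delta_i$, and that $\delta_{i+1}$ (with $1/\delta_{i+1}\ge F(1/\rho_{i+1})$) is small enough relative to $\rho_{i+1}$ and $|S_{i+1}|$ — both of which are automatic once $F$ is chosen to grow sufficiently rapidly, together with the harmless observation that the conclusion of (\ref{eqn:set conv eqn 1}) is inherited by Bohr subsets of $B(S_i,\rho_{i+1})$.
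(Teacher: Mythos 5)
Your proof is correct and follows essentially the same route as the paper's: the identical three-term triangle-inequality decomposition through $I|_{P_{i+1}}*\mu_{B_{i+1}}$, with the two outer terms handled by Corollary~\ref{cor:set convolution} and the middle term by the $L^2$-contractivity of convolution with $\mu_{B_{i+1}}$ (which the paper phrases via two applications of Plancherel together with $\Vert\widehat{\mu_{B_{i+1}}}\Vert_{\ell^\infty}\le 1$) combined with the termination inequality~(\ref{eqn:l2 energy gap}). If anything you are a bit more careful than the paper: you explicitly handle the frequency-set mismatch ($P_i$ uses $S_i$ while $B_{i+1}$ uses $S_{i+1}\supseteq S_i$) when invoking~(\ref{eqn:set conv eqn 1}), noting that $B_{i+1}\subseteq B(S_i,\rho_{i+1})$ so the conclusion of Part~1 of Proposition~\ref{prop:sets with partitions} is inherited, a point the paper's proof leaves implicit.
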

	\begin{proof}
		By triangle inequality, we can write
		\begin{align*}
		\norm{I_1}_L^2 = &\norm{I|_{P_i} - I * \mu_{B_{i+1}} }_{L^2} \\
		\le
		&\norm{I|_{P_i} - (I|_{P_i}) * \mu_{B_{i+1}}}_{L^2} +\\
		&\norm{(I|_{P_i}) * \mu_{B_{i+1}} - (I|_{P_{i+1}}) * \mu_{B_{i+1}}}_{L^2}+\\
		&\norm{(I|_{P_{i+1}}) * \mu_{B_{i+1}} - I * \mu_{B_{i+1}} }_{L^2}.
		\end{align*}
		The first and third terms are bounded by Corollary \ref{cor:set convolution}; choose $\epsilon_0 = 1/F(1)$ and let $F$ grow quickly enough so that $\delta_{i+1}$ and $\rho_{i+1}$ are sufficiently small to satisfy the hypotheses of the Corollary.
		It remains to bound the second term. Applying Plancherel to (\ref{eqn:l2 energy gap}), we see
		\begin{equation*}
		\sum \left|\widehat{I|_{P_{i+1}}} - \widehat{I|_{P_{i}}}\right|^2 \le \frac{1}{F(1)^2}.
		\end{equation*}
		Since $\norm{\widehat\mu_{B_{i+1}}}_{\ell^\infty} \le 1$, we can multiply this through and obtain
		\begin{equation*}
		\sum \left|\widehat{I|_{P_{i+1}}} \widehat\mu_{B_{i+1}} - \widehat{I|_{P_{i}}} \widehat\mu_{B_{i+1}}\right|^2 \le \frac{1}{F(1)^2}.
		\end{equation*}
		Applying Plancherel again, we obtain
		\begin{equation*}
		\norm{({I|_{P_{i+1}}}) * \mu_{B_{i+1}} - ({I|_{P_{i}}}) * \mu_{B_{i+1}}}_{L^2} \le \frac{1}{F(1)}.
		\end{equation*}
		
	\end{proof}
	
	\subsection{Graph regularity}
	For this problem we will need to partition a group $G$ with respect to some functions $f : G \times G \to [0,1]$ in a way that is doubly regular. Specifically, we want a partition $\Pi$ that is graph-theoretically regular in the sense that our functions $f$ can be replaced to within a good approximation by their averages over boxes of $\Pi \times \Pi$, but we would also like the parts of $\Pi$ themselves to be pseudorandom, or Fourier uniform, as subsets of $G$. For a good reference for the various notions of graph regularity we use, see \cite{ls07}. We use the box norm, also referred to as the cut norm, which is discussed in Section 4 of \cite{ls06}. The relevant property we need is the following:
	\begin{equation}\label{eqn:box norm defn}
	\norm{F}_\square =\sup_{g,h:G \to \{0,1\}} \iint F(x,y)g(x)h(y) \lesssim \sup_{g,h:G \to [-1,1]} \iint F(x,y)g(x)h(y).
	\end{equation}
	\begin{lemma}\label{lem:full double regularity}
		Fix $t,\epsilon > 0$ and some quickly growing function $F$. Then there exist a constants $N_0$ such that the following holds. Let $G$ be a compact abelian group, and let $\mc F$ be a family of functions $f:G \to [0,1]$ with cardinality $|\mc F| \le t$. Then there exist: 
		\begin{enumerate}
			\item Three partitions $P_i$, $\Pi_i$, $\Pi$ of $G$, where $P_i = \mf B(S_i, \rho_i)$,  $|\Pi_i| =: m$, $\Pi = P_i \cap \Pi_i$, and $i,|\Pi| \le N_0$.
			\item For each $f \in \mc F$, a decomposition into $f = f_0+f_1+f_2$, such that $f_0 = f|_{\Pi \times \Pi}$, $\norm{f_1}_{L^2} \le 1/F(1/\epsilon)$, and $\norm{f_2}_\square \le 1/F(\epsilon/|\Pi|)$.
			\item For each part $\pi \in \Pi_i$, a decomposition of $I = \ind_{\pi}$ into $I_0+I_1+I_2$, such that $I_0= I|_{P_i}$, $\norm{I_1}_{L^2} \le F(\epsilon/m)$, and $\norm{\widehat{ \ind_{p} \cdot I_2}}_{\ell^\infty} \le F(\epsilon/|\Pi|)$ for every $p \in P_i$.  
		\end{enumerate}
	\end{lemma}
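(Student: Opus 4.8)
The plan is to interleave two energy‑increment procedures — one building a partition $\Pi_j$ of $G$ that regularizes $\mc F$ as a graphon, the other building a Bohr partition $P_j$ that regularizes the parts of $\Pi_j$ as subsets of $G$ — in exactly the telescoping style used for Lemma~\ref{lem:strong bohr regularity}. (Here $f\in\mc F$ should be read as $f:G\times G\to[0,1]$, matching the notation $f|_{\Pi\times\Pi}$.) Along the way I would maintain three invariants: $\Pi_{j+1}$ refines $\Pi_j$; $P_{j+1}$ refines $P_j$; and $\{\ind_\pi:\pi\in\Pi_j\}$ is Bohr‑regular with respect to $P_j$ in the sense of Lemma~\ref{lem:strong bohr regularity}, for a growth function chosen large in terms of $F,\epsilon,|\Pi_j|$. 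I would track the single scalar $\mathcal E_j:=\sum_{f\in\mc F}\norm{f|_{Q_j\times Q_j}}_{L^2}^2$, where $Q_j:=P_j\cap\Pi_j$; since conditioning onto a finer partition does not decrease the $L^2$ norm, $\mathcal E_j$ is nondecreasing, and it is at most $|\mc F|\le t$ because each $f$ is $[0,1]$‑valued.

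One step, given $(P_j,\Pi_j)$: first apply the weak (Frieze--Kannan / cut‑norm) regularity lemma to all of $\mc F$ at once, starting from the partition $Q_j$, at precision $\eta_j:=1/F(\epsilon/|Q_j|)$. This is a self‑contained finite process whose output $\mathcal Q_j'$ refines $Q_j$ by chopping parts along boundedly many (in terms of $t,\eta_j$) \emph{arbitrary} subsets of $G$, and satisfies $\norm{f-f|_{\mathcal Q_j'\times\mathcal Q_j'}}_\square\le\eta_j$ for all $f\in\mc F$. I would push every one of these new cuts into the $\Pi$‑coordinate, defining $\Pi_j'$ to be $\Pi_j$ refined by them, so that $P_j\cap\Pi_j'=\mathcal Q_j'$ while $P_j$ is untouched. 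Then I would restore the Bohr invariant by applying Lemma~\ref{lem:strong bohr regularity} to $\{\ind_\pi:\pi\in\Pi_j'\}$ in a ``relative'' form producing a Bohr partition $P_{j+1}$ that refines $P_j$ — a routine modification of the procedure behind that lemma (initialize it with the generators of $P_j$ inside $S_0$ and with $\delta_0$ fine enough that $P_0$ already refines $P_j$). Finally set $\Pi_{j+1}:=\Pi_j'$; then $Q_{j+1}=P_{j+1}\cap\Pi_j'\supseteq P_j\cap\Pi_j'=\mathcal Q_j'\supseteq Q_j$, so $\mathcal E_{j+1}\ge\mathcal E_j$ and the invariants hold again.

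Since $\mathcal E_j\in[0,t]$ is nondecreasing, there is an index $i\le t\,F(1/\epsilon)^2$ with $\mathcal E_{i+1}-\mathcal E_i\le 1/F(1/\epsilon)^2$; I would stop there, outputting $(P_i,\Pi_i)$, $\Pi:=Q_i=P_i\cap\Pi_i$, and $m:=|\Pi_i|$. For $f\in\mc F$ take the coarse/fine/error split $f_0:=f|_{\Pi\times\Pi}$, $f_1:=f|_{\mathcal Q_i'\times\mathcal Q_i'}-f|_{\Pi\times\Pi}$, $f_2:=f-f|_{\mathcal Q_i'\times\mathcal Q_i'}$. Orthogonality of nested conditional expectations gives $\sum_f\norm{f_1}_{L^2}^2=\sum_f\big(\norm{f|_{\mathcal Q_i'\times\mathcal Q_i'}}_{L^2}^2-\norm{f|_{\Pi\times\Pi}}_{L^2}^2\big)\le\mathcal E_{i+1}-\mathcal E_i\le 1/F(1/\epsilon)^2$, hence $\norm{f_1}_{L^2}\le 1/F(1/\epsilon)$; and $\norm{f_2}_\square\le\eta_i=1/F(\epsilon/|\Pi|)$ by the weak‑regularity guarantee. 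That is Part~2. Part~3 is precisely the Bohr invariant at step $i$ — i.e.\ the output of the application of Lemma~\ref{lem:strong bohr regularity} that produced $P_i$ from $\Pi_i$. For Part~1: $i$ is bounded, and $|\Pi_{j+1}|,|P_{j+1}|$ each exceed their predecessors by only a factor bounded in terms of the current partition sizes and the growth functions, so over boundedly many steps one gets the usual tower‑type bound $N_0(\epsilon,t,F)$ on $i$, on $|\Pi_i|$, and on $|\Pi|$.

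The step I expect to be the main obstacle is not any single estimate but the bookkeeping of which partition absorbs which refinement, and checking that the combined energy is genuinely monotone as a result. The graph‑regularity cuts — arbitrary sets — \emph{must} go into the $\Pi$‑coordinate so that the Bohr partition never gets coarser, and the regularization of the $\ind_\pi$'s \emph{must} be performed relative to the current $P_j$ so that $P_{j+1}$ refines it and the conclusion $I_0=\ind_\pi|_{P_i}$ survives to the final step; with any other convention $\mathcal E_j$ fails to be monotone and termination is lost. The remaining points are harmless: matching the growth functions (the function fed to Lemma~\ref{lem:strong bohr regularity} at step $j$ must be large in terms of $F,\epsilon,|\Pi_j|$, which is fine since $|\Pi_j|$ is already determined and a priori bounded at that moment), and bounding $|\Pi_i|$, which is the standard strong‑regularity tower argument.
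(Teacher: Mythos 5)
Your proposal is essentially sound and follows the same energy-increment strategy as the paper: interleave weak (cut-norm) regularization of $\mc F$ with Bohr regularization of the indicator functions $\{\ind_\pi\}$, tracking a bounded monotone energy to terminate. The decomposition at the final step (coarse projection / nested-refinement difference / cut-norm remainder for $f$; Lemma~\ref{lem:strong bohr regularity}'s output for the $\ind_\pi$'s) is the same as in the paper. The differences are organizational rather than substantive, but they are worth spelling out.

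The main design divergence is in how monotonicity of the energy is guaranteed. You track $\mathcal E_j=\sum_f\Vert f|_{Q_j\times Q_j}\Vert_{L^2}^2$ with $Q_j=P_j\cap\Pi_j$, which forces you to keep the Bohr partitions nested, hence the need for a ``relative'' form of Lemma~\ref{lem:strong bohr regularity}. The paper instead records the energy along the sequence $\Pi_0\subseteq\Pi_1\subseteq\cdots$, which \emph{is} nested by construction (each $\Pi_{j+1}$ refines $P_j\cap\Pi_j$), and then passes the termination test $\Vert f|_{\Pi_{j+1}^2}-f|_{(P_j\cap\Pi_j)^2}\Vert_{L^2}\le\Vert f|_{\Pi_{j+1}^2}-f|_{\Pi_j^2}\Vert_{L^2}$ through the $P_j$-cut; this lets it reapply Lemma~\ref{lem:strong bohr regularity} from scratch at each step without requiring $P_{j+1}$ to refine $P_j$. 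So your claim that the regularization of the $\ind_\pi$'s \emph{must} be done relative to the current $P_j$ is an artifact of your choice of energy functional, not a genuine necessity; the paper's convention avoids the relative lemma entirely. Relatedly, you apply graph regularity before Bohr regularity within a step, the paper the reverse; either order works, but the paper's order means the $\Pi_{j+1}$ it produces already refines $P_j$, which is exactly what makes its termination inequality go through.

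Two smaller points. (1) The relative form of Lemma~\ref{lem:strong bohr regularity} is, as you say, a routine modification (seed $S_0$ with the generators of $P_j$ and start $\delta_0$ at a divisor of $P_j$'s scale so nestedness is preserved through the iteration), but it is an extra lemma you would need to state and check; if you adopt the paper's energy bookkeeping you can skip it. (2) The growth-function matching needs one more beat of care than you give it: when Lemma~\ref{lem:strong bohr regularity} is applied to produce $P_i$, the target bound $F(\epsilon/|\Pi|)$ for $\widehat{\ind_p\cdot I_2}$ references $|\Pi|=|P_i\cap\Pi_i|$, which is not known \emph{before} $P_i$ is produced. This is fine because the guarantee in Lemma~\ref{lem:strong bohr regularity} is itself adaptive in the size of the output Bohr partition (the bound is in terms of $\delta_i^{-1}|\mf B|$), and $|\Pi|\le|P_i|\,|\Pi_i|$ with $|\Pi_i|$ already known; but this deserves a sentence, since it is exactly the kind of circularity that can sink a regularity-lemma proof if the function fed in does not account for the output size.
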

	\begin{proof}
		
	We create these partitions via the following iterative procedure:
	\begin{enumerate}
		\item Initialize a partition $\Pi_0 = G$, and set $i = 0$.
		\item Set $P_i$ to be the partition guaranteed by Lemma \ref{lem:strong bohr regularity}, with $\mc I := \{\ind_{\pi}\}_{\pi \in \Pi_i}.$
		\item Let $\Pi = P_i \cap \Pi_i$ be the common refinement of these partitions. Repeatedly applying weak regularity,
		create $\Pi_{i+1}$ a refinement of $\Pi$ so that $f - f|_{\Pi_i \times \Pi_i}$ has box norm less than $1/F(|\Pi|)$ for each $f \in \mc F$.
		\footnote{For reference, Lemmas 3.1 and 3.2 of \cite{ls06} do essentially this. The argument is standard: Initialize $\Pi_{i+1} = \Pi_i$. Then if there is a box $I_1 \times I_2$ on which $\int_{I_1 \times I_2} (f - f|_{\Pi_{i+1}^2}) \ge 1/F(|\Pi|)$ for some $f \in \mc F$, refine $\Pi_{i+1}$ by intersecting with $I_1$, $I_2$. A quick energy increment calculation shows that each $f$ can only force us to refine $\Pi_{i+1}$ a bounded number of times, after which the construction is complete.
		}
		\item If $\norm{f|_{ \Pi_{i+1} \times \Pi_{i+1}} - f|_{\Pi \times \Pi}}_{L^2} > 1/F(1/\epsilon)$ for any $f \in \mc F$, increment $i$ to $i+1$ and return to step (2). 
	\end{enumerate}
	Since $\Pi$ is a refinement of $P_i$, we have:
	\begin{equation*}
	\norm{f|_{ \Pi_{i+1} \times \Pi_{i+1}} - f|_{\Pi \times \Pi}}_{L^2}  \le \norm{f|_{ \Pi_{i+1} \times \Pi_{i+1}} - f|_{\Pi_i \times \Pi_i}}_{L^2}.
	\end{equation*}
	Moreover, as each $f \in F$ has bounded $L^2$ norm, by the orthogonality of these differences of projections (this is the same statement as (\ref{eqn:telescoping sum})), we may perform Step 4 only a bounded number of times in terms of $t, \epsilon, F$. After the conclusion of this procedure, for each $f \in \mc F$, the decomposition 
	\begin{equation*}
	f = f_0+f_1+f_2,
	\end{equation*} 
	satisfies the conclusions of the theorem, where
	\begin{equation*}
	f_0 = f|_{\Pi \times \Pi}
	\end{equation*}
	\begin{equation*}
	f_1 = f|_{\Pi_{i+1} \times \Pi_{i+1}} - f|_{\Pi \times \Pi}
	\end{equation*}
	\begin{equation*}
	f_2 = f - f|_{\Pi_{i+1} \times \Pi_{i+1}}.
	\end{equation*}
\end{proof}

\section{Counting}\label{sec: counting}
We now specialize to the corners problem specifically, in which we are given a subset of $G \times G$ with density $\alpha$ and want to find corners in this set. It will help to use the following symmetric formulation of this problem, in which we embed our set into the hyperplane $P = \{(x,y,z) \in G \times G \times G : x+y+z = 0\}$ by sending $(x,y) \mapsto (x,y,-x-y)$. Under this map, corners are equivalent to triples of points $(x,y,-x-y),(x,-x-z,z),(-y-z,y,z)$, and the difference $d$ equals $-x-y-z$.  Let $A \subset P$ have density $\mu(A) = \alpha$, and let $f: G \times G \to \R$ be the indicator function of the projection of $A$:
$$
f(x,y) = \ind_A(x, y, -x-y).
$$ Define $g$ and $h$ similarly for the projections onto the $(x,z)$ and $(y,z)$-planes, respectively, and apply Lemma \ref{lem:full double regularity}, regularizing with respect to the set of three functions $ \{f,g,h\} =: \mc F$.

We have now regularized our set with respect to an outer Bohr partition $P_i$, and an inner uniform partition $\Pi$. In the case of $\F_2^n$, Mandache's outer partition that is the analogue of our $P_i$ is given by the cosets of a subspace \cite{m18}. He then counts the number of corners with common difference lying in that subspace. This is convenient for him as any corner with difference lying in a subspace has all three of its points lying in a single part of $P_i^3$, and so he may restrict to individual sections of the hyperplane cut out by the boxes of $P_i^3$. This method relies on the fact that a coset of a subspace is closed under addition by elements of that subspace. Our analogy is the content of Proposition \ref{prop:sets with partitions}, in that parts of a Bohr partition are approximately closed under addition by an element of a much smaller Bohr set. Therefore, having regularized with respect to the Bohr partition $P_i = \mf B(S_i, \delta_i)$, we now count corners with difference lying in a much smaller set $B(S_i, \rho_i')$. Consequently, the vast majority of all corners we count have all three points lying in the same outer box. Here, $\rho_i'$ is an intermediate parameter that should be made sufficiently small with respect to our ``large'' parameters $\epsilon, m, |P_i|$. Anything assumed to be sufficiently small in terms of these three is also assumed to be sufficiently small in terms of $\rho_i'$. The set $B(S_i, \rho_i')$ should be thought of as lying between $B(S_i, \rho_i)$ and $P_{i+1}$, in terms of scale. Define
$$
\nu := \mu_{B(S_i, \rho_i')}.
$$

Our goal is to count count corners in $A$ with difference weighted by $\nu$. This weighted corner count is given by the integral
\begin{equation}\label{eqn:corner count}
\int f(x,y)g(x,z)h(y,z)\nu(-x-y-z).\tag{$*$}
\end{equation}

Let $B, C, D \in P_i$ and let $V = B \times C \times D$. We call such $V$ ``outer boxes.'' The partition
$\Pi$ refines each part in $P_i$ into at most $m = |\Pi_i|$ parts; say $B$ is refined into $\{B_1, \ldots, B_m\}$ and similarly for $C, D$. Then $B \times C \times D$ is refined into $m^3$ ``inner boxes'' of the form $B_i \times C_j \times D_k$. We begin by immediately applying regularity to approximate the corner count in $A$ by averages over inner boxes in $\Pi^3$.
\begin{lemma}\label{lem:counting integral}
	The corner count (\ref{eqn:corner count}) may be approximated as follows:
	\begin{align*}
	(\text{\ref{eqn:corner count}}) = O(\epsilon) +\sum_{\substack{B \times C \times D \in P_i^3\\i,j,k \in m^3}} f_0(B_i, C_j) g_0(B_i,D_k) h_0(C_j,D_k) 
	\int \ind_{B_i}(x) \ind_{C_j}(y) \ind_{D_k}(z) \nu(-x-y-z).
	\end{align*}
\end{lemma}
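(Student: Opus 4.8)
The plan is to substitute the regularity decompositions $f = f_0+f_1+f_2$, $g = g_0+g_1+g_2$, $h = h_0+h_1+h_2$ furnished by Lemma \ref{lem:full double regularity} into (\ref{eqn:corner count}), expand the product $fgh$ into $27$ terms, and show that all but the main term $\int f_0g_0h_0\,\nu(-x-y-z)$ contribute $O(\epsilon)$. The main term is already in the claimed form: since $f_0 = f|_{\Pi\times\Pi}$, $g_0 = g|_{\Pi\times\Pi}$, $h_0 = h|_{\Pi\times\Pi}$ are each constant on every inner box $B_i\times C_j\times D_k$ (and these boxes constitute the partition $\Pi^3$ of $G^3$), integrating box by box and pulling the constants out of each box-integral yields exactly $\sum f_0(B_i,C_j)g_0(B_i,D_k)h_0(C_j,D_k)\int\ind_{B_i}(x)\ind_{C_j}(y)\ind_{D_k}(z)\,\nu(-x-y-z)$. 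So it remains to bound the other $26$ terms.

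These split into two families. If a term contains a factor of $f_1$, $g_1$, or $h_1$ — say $f_1$ — then bounding the remaining two factors pointwise by $1$ (the ``$0$'' pieces lie in $[0,1]$, the ``$2$'' pieces in $[-1,1]$) and integrating out the variable $z$ absent from $f_1$, using $\int_z\nu(-x-y-z)\,dz = 1$, leaves at most $\int_{x,y}|f_1(x,y)| = \norm{f_1}_{L^1}\le\norm{f_1}_{L^2}\le 1/F(1/\epsilon)$; symmetric bounds hold for $g_1$ and $h_1$. Summing the boundedly many such terms gives $O(\epsilon)$, provided $F$ grows quickly enough.

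The remaining family is the $2^3-1 = 7$ terms built from ``$0$'' and ``$2$'' pieces with at least one ``$2$'' factor, and this is where the real work lies: the weight $\nu(-x-y-z)$ couples all three variables, so box-norm control of $f_2$ — which only bounds $\iint f_2(x,y)u(x)v(y)$ — does not apply directly. Fix such a term and select a ``$2$'' factor; by symmetry suppose it is $f_2(x,y)$, and freeze $z$. The other two factors then become functions of $x$ alone and of $y$ alone, each bounded by $1$ in absolute value, while $(x,y)\mapsto\nu(-x-y-z)$ equals $\mu(B)^{-1}\ind_{B_z}(x,y)$, where $B = B(S_i,\rho_i')$ and $B_z = \{(x,y):x+y+z\in B\}$. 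Corollary \ref{cor:box approx}, applied with $\epsilon_0$ a small multiple of $\epsilon$, writes $B_z$ as a disjoint union of at most $C$ boxes $I_\ell^z\times J_\ell^z$ together with a remainder of measure at most $\epsilon_0\mu(B)$. Folding the two single-variable factors into the test functions of the box norm, each box contributes $O(\norm{f_2}_\square)$ by (\ref{eqn:box norm defn}) and the remainder contributes at most $\epsilon_0$ since the integrand is bounded by $1$, so the inner $(x,y)$-integral is $O(C\mu(B)^{-1}\norm{f_2}_\square)+\epsilon_0$ uniformly in $z$, hence so is the full integral. Since $|S_i|$, $\rho_i'$, $|P_i|$, $m$, and the box count $C$ are all bounded in terms of $\epsilon$ and the regularity parameters, whereas $\norm{f_2}_\square\le 1/F(\epsilon/|\Pi|)$ can be driven below any such bound by letting $F$ grow sufficiently fast, the term is $O(\epsilon)$; the identical argument handles the cases where the chosen ``$2$'' factor is $g_2$ or $h_2$, after freezing $y$ or $x$. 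Adding up both families proves the lemma. The one genuinely substantive step is the passage through Corollary \ref{cor:box approx}, which degenerates the Bohr weight into boundedly many combinatorial rectangles once a coordinate is frozen; the rest is the standard bookkeeping of choosing $F$ to dominate the finitely many bounded constants that appear.
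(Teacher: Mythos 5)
Your proof is correct and follows essentially the same route as the paper's: you split into the same $27$ terms, handle the ``$1$'' pieces by bounding the other factors by $1$ and integrating out the free variable to land on the $L^1$ (hence $L^2$) bound, and handle the ``$2$'' pieces by freezing the variable the $f_2$ factor omits, invoking Corollary~\ref{cor:box approx} to replace the Bohr weight by boundedly many rectangles plus a small remainder, and applying the box norm bound~(\ref{eqn:box norm defn}) on each rectangle. Your write-up is if anything slightly more explicit than the paper's (you spell out the identification of the $f_0g_0h_0$ term with the claimed sum and the fact that, once $z$ is frozen, the remaining factors become single-variable test functions for the cut norm), but no step differs in substance.
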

\begin{proof}
We break each occurrence of $f,g,h$ in (\ref{eqn:corner count}) into 3 parts by writing $f = f_0+f_1+f_2$ (similarly for $g$ and $h$). This breaks up the integral into 27 terms.

Let's look at contributions of various terms to this integral. A term that contains $f_1$ can be bounded by taking absolute values and bounding the $g,h$ terms by 1:
$$
\int f_1(x,y)g_a(x,z)h_b(y,z)\nu(-x-y-z) \le \int |f_1(x,y)|\nu(-x-y-z).
$$
Integrating over $z$ eliminates the $\nu$ term and we are left with the $L^1$-norm of $f$, which is bounded by $\epsilon$. Thus, such terms contribute $O(\epsilon)$ to the integral.

For terms that contain $f_2$, we evaluate this integral by first fixing $z$. We are using the box norm, so it will be convenient to approximate $\nu$ by a union of boxes, which is precisely the content of Corollary \ref{cor:box approx}. 

Choosing $\epsilon_0 = \epsilon$, we obtain an approximation of $\nu$ by boxes which differs from the original on a set of measure at most $\epsilon \cdot \mu(B(S_i, \rho_i'))$. Since the value of $|fgh\nu|$ is bounded by $1/\mu(B(S_i, \rho_i'))$, this part of the integral contributes at most $\epsilon$. On the remainder, we have a contribution
\begin{equation}\label{eqn:f2 contribution}
\int f_2(x,y)g_a(x,z)h_b(y,z)\cdot \frac{1}{\mu(B(S_i, \rho_i'))} ~dxdy
\end{equation}
integrated over a collection of at most $C(\epsilon, |S_i|, \rho_i')$ boxes. For fixed $z$, we can bound the integral (\ref{eqn:f2 contribution}) over any box by applying (\ref{eqn:box norm defn}). By assumption this box norm is sufficiently small in terms of $C$ and $\mu(B(S_i, \rho_i'))$ so that the sum of these integrals over all boxes in our approximation of $\nu$ can be made to be $O(\epsilon)$. Finally, integrating this $O(\epsilon)$ contribution over all $z$, we conclude that the contribution from the $f_2$ term is also $O(\epsilon)$.

Consequently, up to an $O(\epsilon)$ error, the number of corners in $A$ is given by the $f_0$, $g_0$, $h_0$ term, which is precisely the expression claimed in the lemma.
\end{proof}

It may be worthwhile to provide an outline of the rest of the proof at this point. Having now expressed the corner count in terms of a function on inner boxes, we will group these terms by their outer box. The contributions from each outer box (except a small exceptional set) can be bounded from below by $\mu(V \cap P) \cdot T(\phi_V)$, where $T$ is the functional defining Mandache's variational problem (appearing, for example, in (\ref{eqn:variational problem})), and $\phi_V$ is some function of three independent random variables that has expectation within $O(\epsilon)$ of  $\alpha(V) = \mu(V \cap A)/\mu(V \cap P)$. Consequently, the contribution from each outer box $V$ will be at least $\mu(V \cap P)m(\alpha(V)+O(\epsilon))$, which is at least $m'(\alpha)+O(\epsilon)$ by the fact that $m$ is Lipschitz \cite{m18}, the pointwise bound $m' \le m$, and the convexity of $m'$. 

We will begin the next section by defining the function $\phi$ for each $V$ and evaluating $\E[\phi]$ and $T(\phi)$, and conclude by showing that $T(\phi)$ is indeed a lower bound for the corner count derived in Lemma \ref{lem:counting integral}.

\section{Reduction to Variational Problem}
We perform the reduction described in the previous section. This follows generally the strategy in Section 3.3 of \cite{m18}, although some counts which are very easy to compute in the finite field case become more involved in the general setting (notably, Lemmas \ref{lem:T(V) in terms of avgs} and \ref{lem:evaluate weights of boxes} may each be replaced by a single line of computation or less, in the finite field setting).

Fix $V = B \times C \times D \in P_i^3$, and let $\Pi$ refine $V$ into $m^3$ inner boxes of the form $B_i \times C_j \times D_k$. Let $X$ be a random $B_i \subset B$, with weight given by
$$\P(X = B_i) = \frac{\mu(B_i)}{\mu(B)} =: \delta_{B_i}.$$
Similarly define $Y,Z$ to be random $C_j$ and $D_k$. 

We will define functions $\phi'$ and $\phi$ for each such choice of $V$. When comparing such constructions across multiple outer boxes $V$, we will use $\phi_V$ to denote the function $\phi$ constructed in box $V$.

Let $\phi': \{B_i\}\times\{C_j\}\times\{D_k\} \to \R$ be defined as follows:
 $$
\phi'(B_i,C_j,D_k) = \frac{1}{\delta_{B_i}\delta_{C_j}\delta_{D_k}} \cdot \frac{\mu(A \cap B_i \times C_j \times D_k)}{\mu(P \cap B \times C \times D)}.
$$
For now, we note:
$$
\E[\phi'] = \frac{\mu(V \cap A)}{\mu(V \cap P)}.
$$
The average of these values of $\E[\phi']$ over all boxes $V \in P_i^3$, weighted by $\mu(V \cap P)$, equals $\mu(A) = \alpha$. Indeed, as we will only ever consider the set of $V \in P_i^3$ as weighted by $\mu(V \cap P)$, we will sometimes make this implicit when referring to small fractions of the set: when we say a collection of outer boxes $X \subset P_i^3$ is at most an $\epsilon$-fraction of all outer boxes, we mean
$$
\sum_{V \in X} \mu(V \cap P) \le \epsilon.
$$
This is often quite different than the measure of $X$ as a subset of $G^3$. Similarly, when taking the expectation of some function over all outer boxes $V$, we will always do so with respect to this measure induced by the hyperplane.

The desired minimization problem requires that $\phi$ has range in $[0,1]$, whereas our $\phi'$ might not; we will fix this, along with some similar normalization problems with $\phi'$, as follows. Define:
\begin{equation*}
\phi(B_i, C_j, D_k) = \begin{cases}
0 & \text{if }\min(\delta_{B_i}, \delta_{C_j}, \delta_{D_k}) < \epsilon^2/m,\\
\min(\phi', 1) & \text{otherwise.}
\end{cases}
\end{equation*}
We show this does not affect our expectation by much. To that end, we begin with a lemma:
\begin{lemma}\label{lem:phi minus phi prime}
	For all but an $O(\epsilon)$-fraction of boxes $V \in P_i^3$, we have $\E(\phi' - \phi)  = O(\epsilon)$.
\end{lemma}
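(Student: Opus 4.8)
The proof proposal is to split the "bad" boxes into two categories corresponding to the two ways $\phi$ differs from $\phi'$: the boxes where some inner coordinate is too thin (the $\min(\delta_{B_i},\delta_{C_j},\delta_{D_k}) < \epsilon^2/m$ clause sets $\phi = 0$), and the boxes where $\phi'$ exceeds $1$ somewhere (the $\min(\phi',1)$ clause). I would show that each of these contributes $O(\epsilon)$ to $\E(\phi'-\phi)$ for all but an $O(\epsilon)$-fraction of outer boxes $V$. Throughout, recall that $\E[\phi']$ and $\E[\phi]$ are expectations with respect to the product measure $\delta_{B_i}\delta_{C_j}\delta_{D_k}$ on inner boxes, and that the outer boxes themselves are weighted by $\mu(V\cap P)$.

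First I would handle the thin-coordinate contribution. Fix $V$; the set of triples $(B_i,C_j,D_k)$ with, say, $\delta_{B_i} < \epsilon^2/m$ has total $X$-probability $\sum_{i:\ \delta_{B_i}<\epsilon^2/m}\delta_{B_i} < m\cdot \epsilon^2/m = \epsilon^2$, since there are at most $m$ parts $B_i$. Summing over the three coordinates, the event $E_V := \{\min(\delta_{B_i},\delta_{C_j},\delta_{D_k})<\epsilon^2/m\}$ has probability $\P(E_V) \le 3\epsilon^2 = O(\epsilon^2)$. On this event $\phi = 0$, so its contribution to $\E(\phi'-\phi)$ is $\E[\phi'\cdot \ind_{E_V}]$. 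Now $\E[\phi'\ind_{E_V}] = \sum_{(B_i,C_j,D_k)\in E_V}\delta_{B_i}\delta_{C_j}\delta_{D_k}\,\phi'(B_i,C_j,D_k)$, and by definition of $\phi'$ each term equals $\mu(A\cap B_i\times C_j\times D_k)/\mu(V\cap P)$, so this sum is $\mu(A\cap (\text{union of thin inner boxes in }V))/\mu(V\cap P)$. Weighting by $\mu(V\cap P)$ and summing over all outer boxes $V$, the total is at most $\mu(A') = \|\text{thin inner boxes}\|$ — but this is not automatically $O(\epsilon)$, so here I must instead argue that the $\mu(V\cap P)$-weighted average of $\E[\phi'\ind_{E_V}]$ is $O(\epsilon)$ and apply Markov: the set of $V$ for which $\E[\phi'\ind_{E_V}] > \epsilon$ has $\mu(\cdot\cap P)$-weight at most $O(\epsilon)$. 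The bound on the average comes from observing that the union over all $V$ of the thin inner boxes, intersected with $P$, has measure $O(\epsilon^2)$ — because within each $V$ it is an $O(\epsilon^2)$-fraction of $\mu(V\cap P)$, by the same $\P(E_V) = O(\epsilon^2)$ estimate combined with the fact that $\mu(\cdot\cap P)$ restricted to $V$ is, up to the regularity already established, comparable to the product measure on inner boxes — and $\mu(A\cap\cdot)\le\mu(\cdot\cap P)$.

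Next, the over-$1$ contribution. On the complement of $E_V$, $\phi = \min(\phi',1)$, so $\phi'-\phi = \max(\phi'-1,0) = (\phi'-1)\ind_{\phi'>1}$. I would bound $\E[(\phi'-1)\ind_{\phi'>1}] \le \E[\phi'\ind_{\phi'>1}]$ and then control $\E[\phi'\ind_{\phi'>1}]$ by a first-moment/Markov argument on the outer boxes. The key point is that $\E[\phi'] = \mu(V\cap A)/\mu(V\cap P) = \alpha(V) \le 1$ always, so $\phi'$ has bounded mean on every box; the issue is only that $\phi'$ might be large on a small set. Since $\phi'\ge 0$ and $\E[\phi']\le 1$, the contribution $\E[\phi'\ind_{\phi'>1}]$ is small unless $\phi'$ has a heavy upper tail, and I would show that the $\mu(V\cap P)$-weighted average over all $V$ of $\E[\phi'\ind_{\phi'>1}]$ is $O(\epsilon)$ — hence by Markov all but an $O(\epsilon)$-fraction of boxes have $\E[\phi'\ind_{\phi'>1}] = O(\epsilon)$. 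Establishing that averaged bound is where the regularity hypotheses of Lemma \ref{lem:full double regularity} enter: one needs that, on most outer boxes, $f_0, g_0, h_0$ (which are the box-averages controlling $\phi'$) are not too concentrated, equivalently that the density $\alpha(V)$-type quantities governing $\phi'$ are spread out across inner boxes. Combining the two exceptional sets (each of $\mu(\cdot\cap P)$-weight $O(\epsilon)$) and the two $O(\epsilon)$ pointwise contributions gives, for all but an $O(\epsilon)$-fraction of $V$, that $\E(\phi'-\phi) = O(\epsilon)$, which is the claim up to the harmless replacement of $\epsilon$ by $O(\epsilon)$.

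**Main obstacle.** The routine part is the thin-coordinate count (pure counting with at most $m$ parts per coordinate); the genuinely delicate part is bounding the average of $\E[\phi'\ind_{\phi'>1}]$, i.e.\ ruling out that $\phi'$ spikes above $1$ on a non-negligible weighted fraction of outer boxes. This is where I expect to lean on the graph-regularity decomposition: $\phi'$ on an inner box is essentially $\mu(A\cap B_i\times C_j\times D_k)$ normalized by the three marginal measures, and the regularity of $f, g, h$ forces these conditional densities to be close to the genuine densities $f_0(B_i,C_j)$ etc., which lie in $[0,1]$ by construction. The subtlety is that $\phi'$ involves a product $x+y+z=0$ constraint (the hyperplane $P$), not a plain triple product, so one must use Corollary \ref{cor:box approx} to replace the constraint $\nu$ by boxes and then argue inner-box by inner-box; quantifying the accumulated error and showing it stays $O(\epsilon)$ on average — rather than just on a single box — is the crux.
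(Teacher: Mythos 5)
Your overall structure---separating the error into a thin-coordinate part (where $\phi=0$) and an over-$1$ part (where $\phi=\min(\phi',1)<\phi'$)---matches the paper's argument, but both halves have genuine gaps as written.

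For the thin-coordinate part, your bound $\P(E_V)\le 3\epsilon^2$ under the product measure $\delta_{B_i}\delta_{C_j}\delta_{D_k}$ is fine, but the passage to bounding $\mu(\text{thin boxes}\cap P)$ via ``comparability of $\mu(\cdot\cap P)|_V$ to the product measure'' is circular: that comparability is precisely Lemma \ref{lem:box intersect P}, whose proof invokes (\ref{eqn:phi prime close to 1}), the pointwise estimate that this lemma is supposed to be establishing. The paper sidesteps this entirely with a projection argument that needs no regularity: the set $X_i\subset P$ of points whose $x$-coordinate lands in a part $B_i$ with $\delta_{B_i}<\epsilon^2/m$ has $\mu$-measure at most $\epsilon^2$, because in each outer part $B$ there are at most $m$ such inner parts and the $x$-marginal of Haar measure on $P$ is Haar measure on $G$. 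Summing over coordinates and applying Markov on outer boxes gives the needed $O(\epsilon)$-exceptional-fraction statement directly.

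For the over-$1$ part, your plan is under-specified in two ways. First, the step $\E[(\phi'-1)\ind_{\phi'>1}]\le\E[\phi'\ind_{\phi'>1}]$ discards exactly the information you need: $\E[\phi']\le 1$ does not prevent $\E[\phi'\ind_{\phi'>1}]$ from being of order $1$ (take $\phi'$ zero except on a tiny inner box where it is huge), and averaging over $V$ does not repair this. What the paper proves is a \emph{pointwise} upper bound $\phi'(B_i,C_j,D_k)\le 1+O\big((\epsilon/m)^{99}/(\delta_{B_i}\delta_{C_j}\delta_{D_k})\big)$ valid on all but an $O(\epsilon)$-fraction of outer boxes, and then sums $\E[\max(\phi'-1,0)]\le\sum_{i,j,k}\delta_{B_i}\delta_{C_j}\delta_{D_k}\cdot O\big((\epsilon/m)^{99}/(\delta_{B_i}\delta_{C_j}\delta_{D_k})\big)=O(\epsilon)$. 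Second, you have identified the wrong regularity as the driver: the graph regularity of $f,g,h$ controls the averages $f_0,g_0,h_0\in[0,1]$ but says nothing about the normalizer $\delta_{B_i}\delta_{C_j}\delta_{D_k}\mu(V\cap P)$ in $\phi'$. The pointwise bound actually needs the Fourier regularity of the \emph{indicators} $I=\ind_{p_i},J,K$ of the inner partition (part 3 of Lemma \ref{lem:full double regularity}): writing $\ind_{B_i}=I\ind_B$ and decomposing $I=I_0+I_1+I_2$, one shows $\mu(P\cap B_i\times C_j\times D_k)\approx\delta_{B_i}\delta_{C_j}\delta_{D_k}\mu(P\cap V)$ via the $L^1$-smallness of $I_1$ and a Plancherel/Cauchy--Schwarz bound on the $I_2$ term, and this triple-intersection pseudorandomness is what pins $\phi'$ near $1$. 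Corollary \ref{cor:box approx}, which you invoke, is used elsewhere (to handle $f_2$ in Lemmas \ref{lem:counting integral} and \ref{lem:T(V) in terms of avgs}) but is not the tool here.
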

\begin{proof}
	We have
	\begin{align*}
	\phi'(B_i, C_j, D_k) &= \frac{1}{\delta_{B_i}\delta_{C_j}\delta_{D_k}} \cdot \frac{\mu(A \cap B_i \times C_j \times D_k)}{\mu(P \cap B \times C \times D)}\\
	&\le \frac{1}{\delta_{B_i}\delta_{C_j}\delta_{D_k}} \cdot \frac{\mu(P \cap B_i \times C_j \times D_k)}{\mu(P \cap B \times C \times D)}.
	\end{align*}
	Let's evaluate $\mu(P \cap B_i \times C_j \times D_k)$. The set $B_i$ is the intersection of the parts $B \in P_i$ and $p_i \in \Pi_i$. For consistency of notation, write $I = \ind_{p_i}, J = \ind_{p_j}, K = \ind_{p_k}$. Consequently we can write $\ind_{B_i} = I \cdot \ind_B$, and similarly for $C_j$ and $D_k$. Thus we want to evaluate 
	\begin{equation*}
	\frac{1}{\delta_{B_i}\delta_{C_j}\delta_{D_k}\mu(V \cap P)}\int_{x,y} I\ind_B(x) J\ind_C(y) K\ind_D (-x-y).
	\end{equation*}
	Ideally, we would show that this quantity cannot be much larger than 1.

To begin, break up $I,J,K$ as described in the regularity section. We can write $I = I_0, I_1, I_2$, where these functions satisfy the conclusion of Lemma \ref{lem:strong bohr regularity}. This breaks the integral into 27 terms.

We first bound terms that contain $I_1$, $J_1$, or $K_1$; without loss of generality, assume the term contains $I_1$.
Bounding $|J_a|,|K_b|$ by 1, this term contributes
\begin{equation*}
\frac{1}{\delta_{B_i}\delta_{C_j}\delta_{D_k}\mu(V \cap P)}\int_{V \cap P}|I_1|.
\end{equation*}
Now by the $L^2$ bound (which also bounds $L^1$), we have 
\begin{equation*}
\E_{V \in P_i^3}\left[ \frac{1}{\mu(V \cap P)}\int_{V \cap P}|I_1|\right] = \norm{I_1}_{L^1} \le F(\epsilon/m) \le  (\epsilon/m)^{100}
\end{equation*} 
As a consequence, in all but an $\epsilon/m$ fraction of outer boxes $V$, we have 
$$
\frac{1}{\mu(V \cap P)}\int_{V \cap P}|I_1| \le (\epsilon/m)^{99}.
$$
There are $3m$ choices of  $I, J, K$, for which an outer box may be exceptional, for a total of $O(\epsilon)$ exceptional outer boxes for this bound. In the rest, the $I_1$, $J_1$, $K_1$ terms always contribute less than
\begin{equation}\label{eqn:I1 term bound}
\frac{(\epsilon/m)^{99}}{\delta_{B_i}\delta_{C_j}\delta_{D_k}}.
\end{equation}

For terms that contain $I_2$ (or equivalently $J_2$ or $K_2$), we express our integral in terms of Fourier coefficients:
$$
\int_{x,y} I_2\ind_B(x) J_a\ind_C(y) P_b\ind_D (-x-y) = \sum_{\xi} \hat I_2 * \hat \ind_B(\xi) \hat J_a* \hat\ind_C(\xi) \hat K_b* \hat \ind_D (\xi).
$$
By our regularity assumptions we may assume the leftmost term is bounded in magnitude by some small $\epsilon_2$, so this sum is bounded by
$$
\epsilon_2 \sum_{\xi} |\hat J_a* \hat\ind_C(\xi)| |\hat K_b* \hat \ind_D (\xi)|.
$$
By Cauchy-Schwarz, this in turn is at most 
$$
\epsilon_2 \norm{\hat J_a* \hat\ind_C}_{\ell^2}\norm{\hat K_b* \hat \ind_D}_{\ell^2} = \epsilon_2 \norm{J_a\ind_C}_{L^2} \norm{K_b\ind_D}_{L^2}\le \epsilon_2.
$$
So terms of this form contribute an error on the order of $\epsilon_2/\mu(V \cap P)$, so we need to make sure $\mu(V \cap P)$ is not too small. This can be achieved easily; consider the boxes in $V \in P_i^3$ such that $\mu(V \cap P) \le \epsilon/|P_i|^3$. Summing over all such boxes, the total fraction of $P$ contained in any of these small outer boxes is at most $\epsilon$. Therefore all but an $\epsilon$-fraction of $V$ have $\mu(V \cap P) \ge \epsilon/|P_i|^3$. Returning to our computation, we can take 
$$
\norm{\widehat{I_2 \cdot \ind_B}}_{\ell^\infty} \le \epsilon_2 \le \frac{(\epsilon/m)^{100}}{|P_i|^3}.
$$
For all of the outer boxes that are not too small, we then get a contribution from $I_2$ terms of
$$
O\left(\frac{(\epsilon/m)^{99}}{\delta_{B_i}\delta_{C_j}\delta_{D_k}}\right).
$$

The only term left contains $I_0, J_0, K_0$ and is simply equal to 1, as (for example) $I_0$ is defined to be the expectation of $\ind_{p_i}$ on $B$, which is precisely $\mu(B_i)/\mu(B) = \delta_{B_i}$. Putting everything together, we have
\begin{equation}\label{eqn:phi prime close to 1}
\phi'(B_i, C_j, D_k) \le \frac{1}{\delta_{B_i}\delta_{C_j}\delta_{D_k}} \cdot \frac{\mu(P \cap B_i \times C_j \times D_k)}{\mu(P \cap B \times C \times D)} \le 1+O\left(\frac{(\epsilon/m)^{99}}{\delta_{B_i}\delta_{C_j}\delta_{D_k}}\right)
\end{equation}
Summing this up, we see
\begin{equation*}
\E[\phi' - \min(\phi', 1)] \le \sum_{i,j,k}\delta_{B_i}\delta_{C_j}\delta_{D_k} \cdot O\left(\frac{(\epsilon/m)^{99}}{\delta_{B_i}\delta_{C_j}\delta_{D_k}}\right) = O(\epsilon),
\end{equation*}
which nearly finishes the proof. We still need to show that ignoring points with $\delta_{B_i}, \delta_{C_j}$, or $\delta_{D_k}$ much smaller than average does not affect our computation by much. Let $X_i$ be the exceptional set of $p = (x,y,z) \in P$ with $\delta_{B_i(p)} \le \epsilon^2/m$, where $B_i(p)$ is the $B_i$ containing $p$. Summing this up over all $i$, we see the union of all $X_i$ has measure at most $\epsilon^2$, and performing the same process for the $y$ and $z$ coordinates gives a set of exceptional points $X$ of size $O(\epsilon^2)$. Then for all but an $\epsilon$-fraction of $V$, we have $\mu(V \cap X)/\mu(V \cap P) \le \epsilon$. In such cases, removing all points in $X$ reduces $\E[\phi']$ by an $O(\epsilon)$-fraction. As a consequence we have
$$
\E(\phi) \ge \E(\phi')(1-O(\epsilon)) - O(\epsilon).
$$
Noting that $\E(\phi) \le 1$, we see $\E(\phi') \le 1+O(\epsilon)$. Concluding, 
\begin{equation*}
\E(\phi'-\phi) = -O(\epsilon)\E(\phi') - O(\epsilon) = O(\epsilon).
\end{equation*}
\end{proof}
\begin{corollary} On all but an $\epsilon$ fraction of $V \in P_i^3$ we have:
	$$
	T(\phi') = T(\phi)+O(\epsilon).
	$$
\end{corollary}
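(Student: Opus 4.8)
The plan is to reduce to an elementary stability estimate for the functional $T$, reusing the bounds already proved for $\psi:=\phi'-\phi$ in Lemma~\ref{lem:phi minus phi prime}. First note that $\phi'\ge 0$ (it is a ratio of measures) and $\phi$ equals $\min(\phi',1)$ or $0$, so $0\le\phi\le\phi'$ pointwise; since each conditional expectation in (\ref{eqn:variational problem}) is monotone and the three factors there are nonnegative, $T(\phi)\le T(\phi')$, and it remains to bound $T(\phi')-T(\phi)\ge 0$ from above. Expanding $T(\phi')=T(\phi+\psi)$ multilinearly in its three conditional expectations, $T(\phi')-T(\phi)$ becomes a sum of seven terms, each a product $\E\big[\E(u_1|\cdot)\,\E(u_2|\cdot)\,\E(u_3|\cdot)\big]$ with $u_1,u_2,u_3\in\{\phi,\psi\}$ and at least one $u_i=\psi$. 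The three terms with exactly one $\psi$ have their other two factors bounded pointwise by $1$ (since $\phi\le 1$), so each is at most $\E[\psi]$, which is $O(\epsilon)$ on all but an $O(\epsilon)$-fraction of $V$ by Lemma~\ref{lem:phi minus phi prime}.

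For the four terms containing $\psi$ twice or three times I would use that, although $\phi'$ itself may be large, every conditional expectation of $\phi'$ is bounded: because $P=\{x+y+z=0\}$ injects onto any pair of coordinates, $\mu\big(P\cap(B_i\times C_j\times D_k)\big)\le\mu(B_i)\mu(C_j)$, and summing over $D_k$ gives $\E(\phi'|X=B_i,Y=C_j)=\mu(A\cap B_i\times C_j\times D)/(\delta_{B_i}\delta_{C_j}\mu(V\cap P))\le\mu(B)\mu(C)/\mu(V\cap P)$, together with its cyclic analogues. Hence $\E(\psi|\cdot)\le\E(\phi'|\cdot)\le C_V:=1/\mu(V\cap P)$, and $C_V$ is bounded in terms of the regularity parameters on all but an $O(\epsilon)$-fraction of $V$ (these are exactly the outer boxes to which the proof of Lemma~\ref{lem:phi minus phi prime} already restricts, where $\mu(V\cap P)\ge\epsilon/|P_i|^3$). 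Bounding all but one conditional-expectation factor in each such term by $C_V$ and the unconditioned average of the remaining $\E(\psi|\cdot)$ by $\E[\psi]$, each of the four terms is at most $C_V^2\,\E[\psi]$.

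Thus $T(\phi')-T(\phi)\lesssim\E[\psi]+C_V^2\,\E[\psi]$, and it remains to see this is $O(\epsilon)$. This is where the only real difficulty lies: since $C_V$ depends on $\epsilon$, the crude bound $\E[\psi]=O(\epsilon)$ is not by itself enough against the factor $C_V^2$. The fix is to split $\psi=\psi_0+\psi_1$, where $\psi_0$ is the restriction of $\psi$ to the inner boxes on which all of $\delta_{B_i},\delta_{C_j},\delta_{D_k}\ge\epsilon^2/m$ — there $\psi_0=(\phi'-1)_+$ is pointwise $O((\epsilon/m)^{90})$ by (\ref{eqn:phi prime close to 1}) — and $\psi_1=\phi'\cdot\1_{\{\min\delta<\epsilon^2/m\}}$, with $\E[\psi_1]\le\E[\psi]=O(\epsilon)$; one then handles the $\psi_0$-part of each term trivially (its conditional expectations are pointwise $O((\epsilon/m)^{90})$), and for the $\psi_1$-part one needs $C_V^2\,\E[\psi_1]=O(\epsilon)$, which follows once the threshold defining $\phi$ — written $\epsilon^2/m$ above, but only ever used to be ``sufficiently small'' — is taken small relative to $C_V$ (legitimate, since $\phi$ is defined only after the regularity parameters are fixed), or alternatively from a sharper bound on $\E[\psi_1]$ obtained by estimating the $A$-measure rather than the $P$-measure of the union of low-density inner boxes. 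Summing the seven terms then gives $T(\phi')-T(\phi)=O(\epsilon)$ on all but an $\epsilon$-fraction of $V$. The main obstacle, to reiterate, is precisely that $\phi'$ is controlled only in $L^1$ and can be large on low-density inner boxes, so the off-the-shelf Lipschitz bound $|T(u)-T(v)|\lesssim M^2\|u-v\|_{L^1}$ for $[0,M]$-valued functions is unavailable; the projection bound $\mu(P\cap\text{box})\le\mu(B_i)\mu(C_j)$ and the splitting of $\psi$ are what replace it.
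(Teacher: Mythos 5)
Your expansion is fully multilinear, producing $2^3-1=7$ correction terms, four of which contain $\psi:=\phi'-\phi$ in two or three of the conditional-expectation factors; those higher-degree terms are exactly the source of the difficulty you identify, since $\psi$ equals $\phi'$ on low-density inner boxes and is therefore not pointwise bounded. The paper never encounters them. It uses the telescoping identity
$$
abc - a'b'c' \;=\; (a-a')b'c' \;+\; a(b-b')c' \;+\; ab(c-c'),
$$
applied with $a=\E(\phi'\mid X,Y)$, $a'=\E(\phi\mid X,Y)$, etc.; this yields only three correction terms, each containing exactly one factor $\E(\psi\mid\cdot)$, while the other two factors in each term are conditional expectations of $\phi$ or $\phi'$ (never of $\psi$), which are bounded by $1+O(\epsilon)$ via (\ref{eqn:conditional exp of phi'}). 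Each correction term is then at most $(1+O(\epsilon))\,\E[\psi]=O(\epsilon)$ on the non-exceptional $V$, and the proof is a few lines.

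As a consequence, your proposed repair of the multi-$\psi$ terms is a genuine gap as written. Splitting $\psi=\psi_0+\psi_1$ is fine, but the claim $C_V^2\,\E[\psi_1]=O(\epsilon)$ is deferred to ``take the threshold $\tau=\epsilon^2/m$ small relative to $C_V$,'' which is not a proof. First, one must actually bound $\E[\psi_1]$ in terms of $\tau$ and $C_V$ (the projection bound you cite gives $\E[\psi_1]\lesssim m\tau C_V$ on non-exceptional $V$, forcing $\tau\lesssim\epsilon/(mC_V^3)\le \epsilon^4/(m|P_i|^9)$), and then one must re-verify that this much smaller threshold is compatible with every place it is used: it enters (\ref{eqn:phi prime close to 1}), Lemma~\ref{lem:box intersect P}, and Lemma~\ref{lem:T(V) in terms of avgs} through the error $O\big((\epsilon/m)^{99}/(\delta_{B_i}\delta_{C_j}\delta_{D_k})\big)$, which with the new threshold acquires factors of $|P_i|^{27}$ and requires re-tuning the regularity function $F$. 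None of that is carried out, and nothing in the proposal flags that $\psi$ must also be expanded inside \emph{each} conditional-expectation factor, multiplying the case analysis. The missing idea is precisely the telescoping decomposition, which makes all of this unnecessary.
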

\begin{proof}
	We expand by linearity of expectation, bounding terms like $\E[\phi'|X,Y]$ and $\E[\phi|X,Y]$ by $1+O(\epsilon)$ (this estimate follows, e.g., from (\ref{eqn:conditional exp of phi'}) below, which does not rely on this corollary).
	\begin{align*}
	T(\phi') =~ &\E\big[\E(\phi'|X,Y)\E(\phi'|X,Z)\E(\phi'|Y,Z)\big]\\
	=~ &\E\big[\E(\phi|X,Y)\E(\phi|X,Z)\E(\phi|Y,Z)\big]+\\
	&\E\big[\E(\phi'-\phi|X,Y)\E(\phi|X,Z)\E(\phi|Y,Z)\big]+\\
	&\E\big[\E(\phi'|X,Y)\E(\phi'-\phi|X,Z)\E(\phi|Y,Z)\big]+\\
	&\E\big[\E(\phi'|X,Y)\E(\phi'|X,Z)\E(\phi'-\phi|Y,Z)\big]\\
	\le~&T(\phi)+(1+O(\epsilon))\Big(\E[\E(\phi'-\phi|X,Y)] +\E[ \E(\phi'-\phi|X,Z)]+\E[ \E(\phi'-\phi|Y,Z)]\Big)\\
	\le ~&T(\phi)+O(\epsilon). 
	\end{align*}
\end{proof}
As a consequence of this, we have 
$$\E_{V \in P_i^3}\big[\E[T(\phi_V)]\big] = \alpha + O(\epsilon).$$

\subsection{Computing $T(\phi)$}
We define an auxiliary function $T(V)$ as follows:
\begin{equation*}
T(V) = \sum_{i,j,k} \begin{cases}
0 & \min(\delta_{B_i},\delta_{C_j},\delta_{D_k}) < \epsilon^2/m,\\
\delta_{B_i}\delta_{C_j}\delta_{D_k}\E(\phi'|B_i,C_j)\E(\phi'|B_i,D_k)\E(\phi'|C_j,D_k)&\text{else.}
\end{cases}
\end{equation*}
Since $T(\phi) \le T(V)$, it suffices to show $T(V)$ gives us a lower bound on corner counts up to an additive error of $O(\epsilon)$.

\begin{lemma}\label{lem:box intersect P}
	On all boxes $B_i \times C_j \times D_k$ that contribute a nonzero amount to $T(V)$, we have 
	$$\frac{\mu(P \cap B_i \times C_j \times D_k)}{\delta_{B_i}\delta_{C_j}\delta_{D_k}\mu(P \cap B \times C \times D)} = 1+O(\epsilon).$$
\end{lemma}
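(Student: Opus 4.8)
The plan is to read this off from the computation already performed in the proof of Lemma~\ref{lem:phi minus phi prime}, where the relevant quantity was estimated from above; here one simply observes that the same argument gives a matching lower bound and that the restriction defining $T(V)$ makes the error term $O(\epsilon)$.

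First I would recall the setup of that proof: writing $I = \ind_{p_i}$, $J = \ind_{p_j}$, $K = \ind_{p_k}$ for the parts of $\Pi_i$ cutting out $B_i, C_j, D_k$, one has $\ind_{B_i} = I\cdot\ind_B$ and similarly for $C_j, D_k$, so that
\begin{equation*}
\mu(P \cap B_i \times C_j \times D_k) = \int_{x,y} I\ind_B(x)\, J\ind_C(y)\, K\ind_D(-x-y)\,dx\,dy .
\end{equation*}
Expanding $I, J, K$ via Lemma~\ref{lem:strong bohr regularity} as $I_0 + I_1 + I_2$ (and likewise for $J, K$) splits this into $27$ terms. The term $I_0 J_0 K_0$ evaluates exactly to $\delta_{B_i}\delta_{C_j}\delta_{D_k}\,\mu(P \cap B \times C \times D)$, while each of the remaining $26$ terms was bounded \emph{in absolute value} by $O\!\big((\epsilon/m)^{99}\big)\cdot \mu(P \cap B \times C \times D)$, valid for every outer box $V = B \times C \times D$ outside an $O(\epsilon)$-fraction of exceptional ones (those that are too small, or bad for the $I_1/J_1/K_1$ estimate). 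Since these are two-sided bounds on possibly-signed quantities, the outcome is the two-sided estimate
\begin{equation*}
\frac{\mu(P \cap B_i \times C_j \times D_k)}{\delta_{B_i}\delta_{C_j}\delta_{D_k}\,\mu(P \cap B \times C \times D)} = 1 + O\!\left(\frac{(\epsilon/m)^{99}}{\delta_{B_i}\delta_{C_j}\delta_{D_k}}\right),
\end{equation*}
of which \eqref{eqn:phi prime close to 1} records only the upper half.

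Second, I would invoke the condition defining which inner boxes contribute to $T(V)$: a box $B_i \times C_j \times D_k$ contributes a nonzero amount only if $\min(\delta_{B_i}, \delta_{C_j}, \delta_{D_k}) \ge \epsilon^2/m$, which forces $\delta_{B_i}\delta_{C_j}\delta_{D_k} \ge (\epsilon^2/m)^3$. Substituting this into the error term above gives $(\epsilon/m)^{99}(m/\epsilon^2)^3 = \epsilon^{93} m^{-96} = O(\epsilon)$, which is exactly the asserted identity.

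I do not expect a genuine obstacle here: the analytic content is entirely contained in Lemma~\ref{lem:phi minus phi prime}. The only points requiring care are that the estimate \eqref{eqn:phi prime close to 1} really is two-sided (it is, since every non-principal term there was controlled in absolute value), and that it holds only after discarding the same $O(\epsilon)$-fraction of exceptional outer boxes $V$, so the statement should be read as applying to all non-exceptional $V$ (in particular those with $\mu(P\cap V) > 0$, for which the displayed ratio is meaningful).
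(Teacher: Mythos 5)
Your proof matches the paper's own, which simply says to plug the constraint $\delta_{B_i}, \delta_{C_j}, \delta_{D_k} \ge \epsilon^2/m$ into (\ref{eqn:phi prime close to 1}). You are usefully more explicit on two points that the paper leaves implicit: that all $26$ non-principal terms in the proof of Lemma~\ref{lem:phi minus phi prime} were bounded in absolute value, so the one-sided inequality recorded in (\ref{eqn:phi prime close to 1}) is really a two-sided estimate (needed for the claimed equality rather than just an upper bound), and that the statement should be read as holding for the non-exceptional $O(1-\epsilon)$-fraction of outer boxes $V$.
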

\begin{proof}
	For contributing boxes, we have $\delta_{B_i}$, $\delta_{C_j}$, $\delta_{D_k} \ge \epsilon^2/m.$ Plug these bounds into (\ref{eqn:phi prime close to 1}). 
\end{proof}

To evaluate $T(V)$ we need to evaluate expressions of the form $\E[\phi' \mid X = B_i, Y = C_j]$. Readers familiar with Mandache's proof may recall that this was a simple computation in $\F_2^n$; that is unfortunately not the case here. We perform these calculations now.
\begin{lemma}\label{lem:T(V) in terms of avgs} For all but an $\epsilon$-fraction of $V \in P_i^3$, we have
\begin{equation*}
T(V) = O(\epsilon) + \sum_{i,j,k} \begin{cases}
0 & \min(\delta_{B_i},\delta_{C_j},\delta_{D_k}) < \epsilon^2/m,\\
\delta_{B_i}\delta_{C_j}\delta_{D_k}f_0(B_i, C_j)g_0(B_i,D_k)h_0(C_j,D_k)&\text{else.}
\end{cases}
\end{equation*}
\end{lemma}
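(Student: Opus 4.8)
The plan is to reduce the lemma to the single claim that, for all but an $O(\epsilon)$-fraction of outer boxes $V=B\times C\times D$ and every inner box $B_i\times C_j\times D_k$ contributing to $T(V)$, one has $\E(\phi'\mid B_i,C_j)=f_0(B_i,C_j)+O(\epsilon)$, together with the two analogous statements $\E(\phi'\mid B_i,D_k)=g_0(B_i,D_k)+O(\epsilon)$ and $\E(\phi'\mid C_j,D_k)=h_0(C_j,D_k)+O(\epsilon)$. Granting this, the lemma is immediate: on the retained boxes each factor of $T(V)$ lies in $[0,1+O(\epsilon)]$ (average \eqref{eqn:phi prime close to 1} over the third coordinate, using $\delta_{B_i},\delta_{C_j},\delta_{D_k}\ge\epsilon^2/m$ on contributing boxes), so expanding the product of the three factors and summing against the weights $\delta_{B_i}\delta_{C_j}\delta_{D_k}$, which total at most $1$, turns the pointwise estimate into $T(V)=\sum_{i,j,k}\delta_{B_i}\delta_{C_j}\delta_{D_k}f_0(B_i,C_j)g_0(B_i,D_k)h_0(C_j,D_k)+O(\epsilon)$ with the same truncation to contributing boxes.

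To prove the claim, I would start from the identity
\begin{equation}\label{eqn:conditional exp of phi'}
\E(\phi'\mid B_i,C_j)=\sum_k\delta_{D_k}\phi'(B_i,C_j,D_k)=\frac{1}{\delta_{B_i}\delta_{C_j}}\cdot\frac{\mu(A\cap B_i\times C_j\times D)}{\mu(P\cap B\times C\times D)},
\end{equation}
and rewrite the numerator as $\int f(x,y)\,\ind_{B_i}(x)\ind_{C_j}(y)\ind_D(-x-y)\,dx\,dy$. Decomposing $f=f_0+f_1+f_2$ via Lemma \ref{lem:full double regularity}, the $f_0$ piece is exactly $f_0(B_i,C_j)\,\mu(P\cap B_i\times C_j\times D)$ since $f_0$ is constant on $B_i\times C_j$; dividing by $\delta_{B_i}\delta_{C_j}\mu(P\cap B\times C\times D)$ and summing Lemma \ref{lem:box intersect P} over $k$ (the non-contributing $k$ together carry only an $O(\epsilon)$ fraction, by \eqref{eqn:phi prime close to 1}) gives $f_0(B_i,C_j)(1+O(\epsilon))$. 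The $f_1$ piece is at most $\big(\delta_{B_i}\delta_{C_j}\mu(P\cap B\times C\times D)\big)^{-1}\int|f_1|\ind_{B_i}(x)\ind_{C_j}(y)\,dx\,dy$; since $\sum_{i,j}\int|f_1|\ind_{B_i}\ind_{C_j}\le\norm{f_1}_{L^1}$ is tiny, discarding an $O(\epsilon)$-fraction of boxes (over all three functions and all $m$ refining parts, exactly as in Lemma \ref{lem:phi minus phi prime}) makes this $O(\epsilon)$, using that the retained boxes have $\mu(P\cap B\times C\times D)\ge\epsilon/|P_i|^3$ and contributing inner boxes have $\delta_{B_i},\delta_{C_j}\ge\epsilon^2/m$.

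The delicate piece is $\int f_2(x,y)\ind_{B_i}(x)\ind_{C_j}(y)\ind_D(-x-y)\,dx\,dy$, where the twisted indicator $\ind_D(-x-y)$ couples $x$ and $y$, so the box-norm bound $\norm{f_2}_\square\le1/F(\epsilon/|\Pi|)$ cannot be applied directly. This is precisely what Corollary \ref{cor:box approx} is for: after discarding the $O(\epsilon)$-fraction of outer boxes with $\min(\mu(B),\mu(C),\mu(D))<\epsilon_0/|P_i|$ (legitimate since $\sum_D\mu(P\cap G\times G\times D)=\sum_D\mu(D)=1$), the set $\{(x,y):-x-y\in D\}$ splits as a disjoint union of a bounded number $C=C(\epsilon_0,|S_i|,\delta_i)$ of honest boxes plus a remainder of measure at most $\epsilon_0\mu(D)$. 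On each box we bound the integral of $f_2$ against the $[0,1]$-valued restrictions of $\ind_{B_i},\ind_{C_j}$ by $\norm{f_2}_\square$ via \eqref{eqn:box norm defn}; on the remainder we use $|f_2|\le1$. Choosing $\epsilon_0$ small in terms of $\epsilon,m,|P_i|$ and then letting $F$ grow quickly enough that $1/F(\epsilon/|\Pi|)$ beats $C^{-1}$ and all other downstream quantities, the whole $f_2$ piece, divided by the denominator $\delta_{B_i}\delta_{C_j}\mu(P\cap B\times C\times D)$ (bounded below in terms of $\epsilon,m,|P_i|$ on the retained boxes), is $O(\epsilon)$. Adding the three pieces proves the claim for $\E(\phi'\mid B_i,C_j)$; the other two are identical after relabeling, with $\ind_C(-x-z)$ and $\ind_B(-y-z)$ in place of $\ind_D(-x-y)$. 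I expect the main obstacle to be exactly this last step: decoupling $\ind_D(-x-y)$ so that graph regularity applies, and arranging the parameter hierarchy so that the box count $C(\epsilon_0)$ from Corollary \ref{cor:box approx} never swamps the box-norm gain.
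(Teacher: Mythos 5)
Your proposal follows essentially the same route as the paper: compute $\E(\phi'\mid B_i,C_j)$ as an integral, split $f=f_0+f_1+f_2$, handle the $f_2$ piece by decoupling $\ind_D(-x-y)$ via Corollary \ref{cor:box approx} and the box-norm bound, handle the $f_1$ piece globally via its $L^1$ norm and Markov, and identify the $f_0$ piece with $(1+O(\epsilon))f_0(B_i,C_j)$ via Lemma \ref{lem:box intersect P}. Your first paragraph (reducing the lemma to the three conditional-expectation estimates and recombining) is just a slight reorganization of the paper's closing step, so the proposal is correct and matches the paper.
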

\begin{proof}
We begin by computing
\begin{align*}
\E[\phi' \mid X = B_i, Y = C_j] &= \frac{1}{\delta_{B_i}\delta_{C_j}} \cdot \frac{\mu(A \cap B_i \times C_j \times D )}{\mu(V \cap P)}\\
&= \frac{1}{\delta_{B_i}\delta_{C_j}\mu(V \cap P)} \int I\ind_B (x) J\ind_C (y) f(x,y) \ind_D(-x-y).
\end{align*}
We also note that for terms contributing a nonzero amount to $T(V)$, we have
$$
\frac{1}{\delta_{B_i}\delta_{C_j}} \le \frac{m^2}{\epsilon^4},
$$
And that for all but an $\epsilon$-fraction of outer boxes $V$, we have
$$
\frac{1}{\mu(V \cap P)} \le \frac{|P_i|^3}{\epsilon}.
$$
We break up the contribution to our integral into various pieces.

First we write $f$ into $f_0+f_1+f_2$. For terms that contain $f_2$, we want to use the box norm bound. The function $\ind_D$ is the indicator function of a Bohr part; as such it can be broken up into boxes on which it is constant by Corollary \ref{cor:box approx}. This Corollary does not hold for the $\epsilon$-fraction of boxes with $\mu(D)$ too small, so we discard those exceptional boxes. On the rest, we can write $\ind_D$ as the union of a set with measure $\le \epsilon_0$ and a collection of $C$ boxes, where $C$ is bounded in terms of $\epsilon_0, |P_i|$. We choose $\epsilon_0$ sufficiently small in terms of $\epsilon, m, |P_i|$ so that this leftover set has measure less than $\epsilon/(\delta_{B_i}\delta_{C_j}\mu(V \cap P))$, so this part contributes at most $\epsilon$ to the integral. Since $f_2$ has sufficiently small box norm in terms of $\epsilon, m, |P_i|$, the contributions from the boxes sum to $O(\epsilon)$ as well, which finishes the bounds on the $f_2$ term.

Next we consider the $f_1$ term. Since we have a global bound on $\norm{f_1}_{L^2}$, we want to handle this term globally as well. The contribution to $\E[T(V)]$ from $f_1$ terms is bounded by:
\begin{align*}
&\le\E_{V \in P_i^3} \sum_{i,j,k} \delta_{B_i}\delta_{C_j}\delta_{D_k} \cdot \frac{1}{\delta_{B_i}\delta_{C_j}\mu(V \cap P)}\int_{P \cap B_i \times C_j \times D} |f_1|(1+O(\epsilon)) \\
&=\sum_{V \in P_i^3}\sum_{i,j}\int_{P \cap B_i \times C_j \times D}|f_1|(1+O(\epsilon))\\
&= \norm{f_1}_{L^1}(1+O(\epsilon)) \\
&= O(\epsilon^2).
\end{align*}
By Markov then, on all but an $\epsilon$-fraction of outer boxes the $f_1$ terms contribute $O(\epsilon)$ to $T(V)$. The only remaining terms contain all of $f_0$, $g_0$, $h_0$. Such a term evaluates to
\begin{equation*}
\frac{\mu(P \cap B_i \times C_j \times D)}{\delta_{B_i}\delta_{C_j}\mu(V \cap P)} f_0(B_i, C_j) = (1+O(\epsilon))f_0(B_i, C_j),
\end{equation*}
by applying Lemma \ref{lem:box intersect P}, and so we have:
\begin{equation}\label{eqn:conditional exp of phi'}
\E[\phi' \mid X = B_i, Y = C_j] = f_0(B_i, C_j) + O(\epsilon).
\end{equation}
Combining these terms gives the desired expression.
\end{proof}

We now know that $T(V)$, and consequently $\E[T(V)]$, can be approximated by a nice sum of terms involving only the averages $f_0,g_0,h_0, \delta_{B_i},\delta_{C_j},\delta_{D_k}$, and moreover $\E[T(V)]$ is within $O(\epsilon)$ of $\E[T(\phi)]$ and therefore lower bounded by the solution to Mandache's variational problem. It remains to show that this expression is a lower bound for the corner count derived in Lemma (\ref{lem:counting integral}).

\begin{lemma}\label{lem:evaluate weights of boxes}
	For all but an $\epsilon$-fraction of $V \in P_i^3$ and all $B_i \times C_j \times D_k$ contributing to $T(V)$, we have
	\begin{equation*}
	\int \ind_{B_i}(x) \ind_{C_j}(y) \ind_{D_k}(z) \nu(-x-y-z) \ge (1 + O(\epsilon)) \delta_{B_i}\delta_{C_j}\delta_{D_k} \mu(P \cap B \times C \times D).
	\end{equation*}
\end{lemma}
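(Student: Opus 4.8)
The plan is to evaluate the integral $\int \ind_{B_i}(x)\ind_{C_j}(y)\ind_{D_k}(z)\nu(-x-y-z)$ by the same Fourier / regularity strategy already used in Lemma~\ref{lem:phi minus phi prime}, but now obtaining a \emph{lower} bound rather than an upper bound. Write $\ind_{B_i} = I\cdot\ind_B$ with $I = \ind_{p_i}$, and use the decomposition $I = I_0 + I_1 + I_2$ from Lemma~\ref{lem:strong bohr regularity} (similarly $J,K$ for the $C_j, D_k$ factors). Expanding gives $27$ terms. The main term is the $I_0 J_0 K_0$ term, which equals $\delta_{B_i}\delta_{C_j}\delta_{D_k}\int \ind_B(x)\ind_C(y)\ind_D(z)\nu(-x-y-z)$; and here $\int \ind_B\ind_C\ind_D\nu(-x-y-z)$ is, up to an $O(\epsilon)$ multiplicative error, just $\mu(P\cap B\times C\times D)$ — this is a statement that convolving the box $B\times C\times D$ against the small Bohr set $\nu$ does not change its mass on the hyperplane $P$ much, which follows from Proposition~\ref{prop:sets with partitions}(1): since $\rho_i'$ is far below $\delta_i$, almost every translate $x+y+B(S_i,\rho_i')$ stays inside a single part of $\mf B(S_i,\delta_i)$, so for almost all $(x,y)\in B\times C$ the $\nu$-average of $\ind_D(-x-y-\cdot)$ agrees with $\ind_D(-x-y)$. (One throws away the $\epsilon$-fraction of outer boxes with $\mu(B\times C\times D\cap P)$ too small so that the exceptional mass is absorbed.)

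Next I would bound the error terms. For a term containing $I_1$ (say), bound $|J_a|,|K_b|\le 1$ and integrate out $z$ against $\nu$ to get something controlled by $\frac{1}{\mu(B)}\int_{B_i\text{-fiber}}|I_1|$ type quantities; since $\|I_1\|_{L^1}\le \|I_1\|_{L^2}$ is tiny (quantitatively $F(\epsilon/m)$-small), Markov over outer boxes shows that on all but an $\epsilon$-fraction of $V$ the total $I_1$-contribution is $O(\epsilon\,\delta_{B_i}\delta_{C_j}\delta_{D_k}\mu(P\cap V))$, using also the lower bounds $\delta_{B_i},\delta_{C_j},\delta_{D_k}\ge \epsilon^2/m$ that hold on contributing boxes. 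For a term containing $I_2$, first approximate $\nu$ by a bounded union of boxes via Corollary~\ref{cor:box approx} (discarding the $\epsilon$-fraction of outer boxes with $\mu(B(S_i,\rho_i'))$-scale issues is unnecessary since that volume is constant, but one does discard boxes where $\mu(P\cap V)$ is too small), then on each box write the integral in Fourier coefficients and pull out $\|\widehat{I_2\cdot\ind_B}\|_{\ell^\infty}$, which the regularity lemma makes arbitrarily small in terms of $\epsilon, m, |P_i|$, and the remaining sum is bounded by Cauchy--Schwarz and Plancherel exactly as in Lemma~\ref{lem:phi minus phi prime}. Summing, the $I_2$-contribution is also $O(\epsilon\,\delta_{B_i}\delta_{C_j}\delta_{D_k}\mu(P\cap V))$.

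Combining, $\int \ind_{B_i}\ind_{C_j}\ind_{D_k}\nu(-x-y-z) = \delta_{B_i}\delta_{C_j}\delta_{D_k}\mu(P\cap B\times C\times D)\,(1 + O(\epsilon))$ on all but an $\epsilon$-fraction of outer boxes, which in particular gives the claimed one-sided inequality (the constant in the $O(\epsilon)$ being absolute). The main obstacle I expect is keeping the error bookkeeping honest: each error term is only small \emph{after} dividing by $\delta_{B_i}\delta_{C_j}\delta_{D_k}$ and by $\mu(P\cap V)$, so I must first restrict to the contributing inner boxes (where the $\delta$'s are $\gtrsim \epsilon^2/m$) and to the non-negligible outer boxes (where $\mu(P\cap V)\gtrsim \epsilon/|P_i|^3$), and then choose the growth rate of $F$ in the regularity lemma large enough — in terms of $\epsilon, m, |P_i|$ — that the $L^1$ and $\ell^\infty$ regularity parameters beat these reciprocals with room to spare, just as was arranged in the proof of Lemma~\ref{lem:phi minus phi prime}. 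The Bohr-set step (the $I_0J_0K_0$ term) is the only genuinely new ingredient compared to that lemma, and Proposition~\ref{prop:sets with partitions}(1) is exactly tailored to it.
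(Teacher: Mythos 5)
Your plan matches the paper's proof in its essentials: decompose $\ind_{B_i}=I\cdot\ind_B$ with $I=I_0+I_1+I_2$ from Lemma~\ref{lem:strong bohr regularity} (similarly $J,K$), bound the error terms using the $L^1$/$\ell^\infty$ regularity parameters together with the lower bounds $\delta_{B_i}\ge\epsilon^2/m$ and $\mu(V\cap P)\gtrsim\epsilon/|P_i|^3$ on contributing boxes, and recover the main term from the fact that convolving a $P_i$-measurable function with $\nu$ at scale $\rho_i'$ changes it only slightly. The paper invokes Corollary~\ref{cor:set convolution} for that last fact (both after integrating out $z$ in the $I_1$ term and in the final $I_0J_0K_0$ evaluation) rather than citing Proposition~\ref{prop:sets with partitions}(1) directly, but these are the same ingredient, and the rest of the bookkeeping is as you describe.

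One step as written would not go through. For the $I_2$ terms you propose to first approximate $\nu$ by boxes via Corollary~\ref{cor:box approx} and only then ``pull out $\|\widehat{I_2\cdot\ind_B}\|_{\ell^\infty}$ on each box.'' But once you restrict to a sub-box $U$ the relevant Fourier transform is $\widehat{I_2\ind_B\ind_U}$, and the regularity lemma gives no control over that. The $\nu$-to-boxes reduction is designed for the \emph{two}-variable $f_2$ term in Lemma~\ref{lem:counting integral}, where the box norm is stable under truncation by indicators; it is the wrong tool for the one-variable $I_2$, and it is also unnecessary. Instead expand the full integral in characters:
\[
\int I_2\ind_B(x)\,J_a\ind_C(y)\,K_b\ind_D(z)\,\nu(-x-y-z)
= \sum_{\xi} \hat\nu(\xi)\,\widehat{I_2\ind_B}(\xi)\,\widehat{J_a\ind_C}(\xi)\,\widehat{K_b\ind_D}(\xi),
\]
and since $|\hat\nu(\xi)|\le 1$, Cauchy--Schwarz and Plancherel bound this by $\|\widehat{I_2\ind_B}\|_{\ell^\infty}\|J_a\ind_C\|_{L^2}\|K_b\ind_D\|_{L^2}$, exactly as in Lemma~\ref{lem:phi minus phi prime} with the harmless extra $\hat\nu$ factor along for the ride. (A smaller slip: after integrating out $z$ and replacing $\ind_D*\nu$ by $\ind_D$ via Corollary~\ref{cor:set convolution}, the $I_1$ contribution is $\int_{V\cap P}|I_1|$, with no $1/\mu(B)$ prefactor.)
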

\begin{proof}
Expanding products of indicator functions, the left-hand side above becomes:
$$
\int I\ind_B (x) J\ind_C(y)K \ind_D (z) \nu(-x-y-z).
$$
Break up the $I = I_0+I_1+I_2$, and similarly for $J,K$. Recall that on contributing inner boxes in non-exceptional outer boxes, we have 
$$
\delta_{B_i}\delta_{C_j}\delta_{D_k}\mu(P \cap B \times C \times D) \gg_{\epsilon,m,|P_i|} 1.
$$
Assume we are dealing with a term containing $I_2$,$J_2$ or $K_2$. Then since $\norm{\widehat{I_2\ind_B}}_{\ell^\infty}$ may be assumed to be sufficiently small in terms of $\epsilon, m, |P_i|$, this contribution may immediately be bounded by $\epsilon\delta_{B_i}\delta_{C_j}\delta_{D_k} \mu(P \cap B \times C \times D)$ via Plancherel and Cauchy-Schwarz.
If we are dealing with an $I_1$ term, take absolute values and bound $|J_a|, |K_b|$ by 1, obtaining a contribution of 

$$
\int| I_1|\ind_B (x) \ind_C(y)\ind_D (z) \nu(-x-y-z)
$$ We integrate over $z$ first: Applying Corollary \ref{cor:set convolution}, we have
$$
\int_z\ind_D(z)\nu(-x-y-z) =\ind_D * \nu(-x-y)\approx \ind_{D}(-x-y).
$$ 
In particular we may replace one for the other and incur an arbitrarily small $L^2$ penalty (in terms of, say, $\epsilon, m, |P_i|$).
Making this substitution, we now want to compute
$$
\int| I_1|\ind_B (x) \ind_C(y)\ind_D (-x-y),
$$
which is $O(\epsilon/m)^{99}\mu(P \cap B \times C \times D)$ by (\ref{eqn:I1 term bound}). Applying our lower bounds on $\delta_{B_i}, \delta_{C_j},\delta_{D_k}$, this error is indeed $O(\epsilon)\cdot\delta_{B_i}\delta_{C_j}\delta_{D_k}\mu(P \cap B \times C \times D)$.
Finally for terms that are constant on $\ind_B$, that is just 
$$\delta_i \delta_j \delta_k \int \ind_B \ind_C \ind_D \nu_{B'}(-x-y-z).$$
Applying Corollary \ref{cor:set convolution} again, this is within $1+O(\epsilon)$ of 
$\int \ind_B(x)\ind_C(y) \ind_D(-x-y)$, which completes the proof.
\end{proof}
We are now in a position to prove our main theorem.
\begin{proof}[Proof of Theorem \ref{thm:main thm}]
Putting everything together, we have
\begin{align*}
&\int f(x,y)g(x,z)h(y,z) \nu (-x-y-z)\\ 
=&~O(\epsilon) +\sum_{\substack{B \times C \times D \in P_i^3\\i,j,k \in m^3}} f_0(B_i, C_j) g_0(B_i,D_k) h_0(C_j,D_k) 
\int \ind_{B_i}(x) \ind_{C_j}(y) \ind_{D_k}(z) \nu(-x-y-z). \\
\ge&~O(\epsilon)+ \sum_{\substack{B \times C \times D \in P_i^3\\i,j,k \in m^3}} f_0(B_i, C_j) g_0(B_i,D_k) h_0(C_j,D_k)\delta_i\delta_j\delta_k~ \mu(B \times C \times D \cap P)\\
\ge&~ O(\epsilon)+ \sum_{B \times C \times D \in P_i^3} \mu(B \times C \times D \cap P) T(V)\\
=&~  O(\epsilon)+ \sum_{B \times C \times D \in P_i^3} \mu(B \times C \times D \cap P) T(\phi_{B \times C \times D})\\
\ge&~ O(\epsilon)+\E_V\big[m(\alpha(V)+O(\epsilon))\big]\\
=&~O(\epsilon)+m'(\alpha),
\end{align*}
which completes the proof.
\end{proof} 

\section{Concluding remarks}
We conclude with a proof of Corollary \ref{thm:corners in Z}. To do this we simply need to include an extra character in our Bohr sets when performing the proof in $\Z/n\Z$; this strategy appears in Green's work \cite{g05}.
\begin{proof}[Proof of Corollary \ref{thm:corners in Z}]
	Embed $A \subseteq [n]^2$ in the natural way into $(\Z/n\Z)^2$ and perform the proof to count corners in $(\Z/n\Z)^2$. However, when choosing each Bohr set $S_i$, include (if it is not already present) the character $x \mapsto \exp(2\pi i x/n)$ as one of the frequencies. The rest of the proof proceeds unmodified, and one obtains the correct corner count in $(\Z/n\Z)^2$, but some corners in $(\Z/n\Z)^2$ do not pull back to corners in $[n]^2$, e.g., triples that look like $(x,y), (x+d,y), (x,y+d-n)$. Here is where the modification helps. Since we are only allowing differences in a Bohr set $B$ which contains  $x \mapsto \exp(2\pi i x/n)$ of some radius $\rho$, every $d \in B$ lies in the interval $[-\rho n, \rho n]$. Consequently the only corners in $(\Z/n\Z)^2$ with difference in $B$ which do not pull back to corners in $[n]^2$ must have either $x$ or $y$ lying in $[0,\rho n] \cup [n-\rho n, n]$. Ensuring $\rho = O(\epsilon)$, the number of such corners is $O(\epsilon n^2|B|)$, and so deleting these bad corners we are still left with $(m'(\alpha)-O(\epsilon))n^2|B|$ corners in $A$, which is sufficient for the claim.
\end{proof}

\subsection{Acknowledgments}
The author would like to thank Ashwin Sah and Mehtaab Sawhney for their encouragement, and Yufei Zhao for suggesting the problem and for helpful comments on multiple drafts of this paper. The author is supported by an NSF GRFP fellowship.

\bibliographystyle{acm}
\bibliography{Popular_Corners}

\end{document}